\renewcommand\tableofcontents{%
	\null\hfill\textbf{\Large\contentsname}\hfill\null\par
	\@mkboth{\MakeUppercase\contentsname}{\MakeUppercase\contentsname}%
	\@starttoc{toc}%
}
\numberwithin{equation}{section}
\renewcommand{\Re}{\operatorname{Re}}
\newtheorem{thm}{Theorem}[section]
\newtheorem{lem}[thm]{Lemma}
\newtheorem{prop}[thm]{Proposition}
\newtheorem{defn}{Definition}[section]
\newtheorem{exmp}{Example}[section]
\newtheorem{rmk}{Remark}[section]
\numberwithin{equation}{section}
\renewcommand{\Re}{\operatorname{Re}}
\def \R{{\mathbb{R}}}
\def \P {\Phi(x)}
\def \o {\omega(x)}
\def \japxik{\langle \xi \rangle_k}
\def \japx{\langle x \rangle}
\def \hyp{Z_{ext}(N)}
\def \pd{Z_{int}(N)}
\def \J{[0,T] \times \R^{2n}}
\def \la{\langle}
\def \ra{\rangle}
\def \rak{\rangle_k}
\def \h{h(x,\xi)}
\def \wmt{(\omega,g_{\Phi,k}^{\rho,\tilde\rho})}
\providecommand{\keywords}[1]
{
	\small	
	\textbf{\text{Keywords:}} #1
}
\providecommand{\subclass}[1]
{
	\small	
	\textbf{\text{MSC(2010):}} #1
}
\title{\Large Global Well-posedness of a Class of \\Singular Hyperbolic Cauchy Problems\thanks{Dedicated to Bhagawan Sri Sathya Sai Baba on the Occassion of His 96th Birthday.}}
\author{\normalsize Rahul Raju Pattar\thanks{rahulrajupattar@gmail.com (Corresponding Author)} , N. Uday Kiran\thanks{nudaykiran@sssihl.edu.in}  \\
	\small Department of Mathematics and Computer Science\\
	\small Sri Sathya Sai Institute of Higher Learning, Puttaparthi, India \\
}
\date{}
\begin{document} 
	
	\maketitle	
	
	\begin{abstract}
		The goal of this paper is to establish a global well-posedness, cone condition and loss of regularity for singular hyperbolic equations with coefficients in { $L^1((0,T];C^\infty(\mathbb{R}^n)) \cap C^1((0,T];C^\infty(\mathbb{R}^n))$} and Cauchy data in an appropriate Sobolev space tailored to a metric on the phase space.
		The coefficients are unbounded near the singular hyperplane $t=0$ and polynomially growing as $|x| \to \infty.$ The singular behavior is characterized by the blow-up rate of the coefficients and their first $t$-derivatives near $t=0.$
		In order to study the interplay of the singularity in $t$ and unboundedness in $x$, we consider a class of metrics on the phase space. Our methodology relies on the use of the Planck function associated to the metric to subdivide the extended phase and to define an infinite order pseudodifferential operator for the conjugation.
		We also give some counterexamples.\\
		\keywords{Singular Hyperbolic Cauchy Problem $\cdot$ Loss of Regularity $\cdot$ Global Well-posedness $\cdot$ Metric on the Phase Space $\cdot$ Pseudodifferential Operators}\\
		\subclass{35L81 $\cdot$ 35L15 $\cdot$ 35S05 $\cdot$ 35B65 $\cdot$ 35B30}
	\end{abstract}
	
	
	\section{Introduction}
	We consider the singular hyperbolic Cauchy problems of the form
	\[
	u_{tt} + A(t,x,D_x)u_t + B(t,x,D_x)u = f, \quad (t,x)  \in (0,T] \times \R^n, \; T<\infty,
	\]
	where $u$ is a function of $t,$ valued in a Sobolev space while $A$ and $B$ are linear partial differential operators some of whose coefficients tend to infinity in some sense as $t \to 0.$ See \cite{CS} and the references therein for a discussion on singular hyperbolic Cauchy problems.
	A study of such problems is motivated by applications in physics \cite{GY}, in the study of blow-up solutions of quasilinear problems \cite[Section 4]{hiro} and also in the study certain degenerate Cauchy problems \cite{CS}.
	One of the prominent features of singular hyperbolic Cauchy problems is the loss of regularity index of the solutions in relation to the initial data defined in an appropriate Sobolev space. 
	
	In order to describe the loss of regularity and well-posedness results from the literature, consider the following model strictly hyperbolic equation
	\begin{equation}\label{mod}
		\begin{cases}
			\begin{aligned}
				&\partial_t^2u - a(t,x)\partial_x^2u + \sum_{\substack{j,l=0 \\ j+l \leq 1}}^{1} b_{j,l}(t,x) \partial_x^j \partial_t^lu = f(t,x), \quad (t,x) \in (0,T] \times \R, \\
				&u(0,x) = f_1(x), \; \partial_t(0,x) = f_2(x).
			\end{aligned}
		\end{cases}
	\end{equation}
	The operator coefficients are in $L^1\left((0,T]; C^\infty(\R)\right) \cap C^1\left((0,T]; C^\infty(\R)\right)$ and the singular behavior of the above Cauchy problem is described by the following estimates
	\begin{equation}\label{sing}
		\begin{rcases}
			\begin{aligned}
				|\partial_x^\beta \partial_t a(t,x)| &\leq C_{\beta}^{(1)} \; \o^2\P^{-|\beta|} \; \frac{1}{t^q} |\ln t|^{(\tilde\gamma-1){\bf I}_q},\\
				|\partial_x^\beta a(t,x)| &\leq C_{\beta}^{(2)} \; \o^2\P^{-|\beta|} \; \frac{1}{t^p} |\ln t|^{\tilde\gamma {\bf I}_q},\\
				|\partial_x^\beta b_{j,l}(t,x)| &\leq C_{\beta}^{(3)} \; \o^j \P^{-|\beta|} \; \frac{1}{t^r},
			\end{aligned}
		\end{rcases}
	\end{equation}
	with $\beta \in \mathbb{N}_0, C_{\beta}^{(i)}>0,i=1,2,3,$ $q \in [1,\infty), p \in [0,1), p \leq q-1$ and $ r \in [0,1).$ The function ${\bf I}_q$ is such that ${\bf I}_q \equiv 1$ if $q=1$ else ${\bf I}_q \equiv 0.$ The functions $\o$ and $\P$ are positive monotone increasing in $|x|$ such that $1\leq \o \lesssim \P\lesssim \japx = (1+|x|^2)^{1/2}.$ They specify the structure of the differential equation in the space variable. These functions will be discussed in detail in Section \ref{tools}. 
	
	In this work, our main interest is on the optimality of loss when the coefficients are singular in time and unbounded in space. In particular, our interest is either  blow-up or infinitely many oscillations near $t=0$ and polynomial growth in $x$. 
	Such equations have a well-known behavior of a loss of derivatives when the coefficients are bounded in space together with all their derivatives (see for example \cite{CSK,Cico1}). Along with the extension of the results to a global setting ($x \in \R^n$ and the coefficients are allowed to grow polynomially in $x$), we also investigate the behavior at infinity of the solution in relation to the coefficients. By a loss of regularity index of the solution in relation to the initial datum we mean a change of indices in an appropriate Sobolev space.
	See Table \ref{T1} for the well-posedness results from the literature in the context of singular hyperbolic Cauchy problems.
	
	\begin{table}[h]
		\begin{tabular}{@{}|cccc|c|cc|c|c|@{}}
			\toprule
			
			\multicolumn{4}{|c|}{\begin{tabular}[c]{@{}c@{}}\footnotesize Order of Singularity\\ \footnotesize at $t=0$\end{tabular}}                                                                                               & \multirow{2}{*}{\begin{tabular}[c]{@{}c@{}}\footnotesize Regularity \\ \footnotesize  in $t$ of \\ \footnotesize coefficients\end{tabular}} & \multicolumn{2}{c|}{\begin{tabular}[c]{@{}c@{}}\footnotesize Growth in $x$\\ \footnotesize of coefficients\end{tabular}} & \multirow{2}{*}{\begin{tabular}[c]{@{}c@{}}\footnotesize Loss of Regularity\\ \footnotesize index for Solution\end{tabular}} & \multirow{2}{*}{\footnotesize Ref.}             \\ \cmidrule(r){1-4} \cmidrule(lr){6-7}       
			  
			\multicolumn{1}{|c|}{$p$}                                 & \multicolumn{1}{c|}{$q$}                                      & \multicolumn{1}{c|}{$\tilde\gamma$} & $r$              &                                    & \multicolumn{1}{c|}{$\omega$}             & $\Phi$             &                                                        &                                  \\ \midrule
			\multicolumn{1}{|c|}{$0$}                                 & \multicolumn{1}{c|}{$1$}                                      & \multicolumn{1}{c|}{$(0,1)$}        & 0                & \footnotesize$C^1((0,1])$                       & \multicolumn{1}{c|}{$\omega(x)$}          & $\Phi(x)$          & \multicolumn{1}{c|}{\begin{tabular}[c]{@{}c@{}} \footnotesize arbitrarily\\ \footnotesize small\end{tabular}}            & \footnotesize \cite{RahulNUK5} \\ \midrule
			\multicolumn{1}{|c|}{$0$}                                 & \multicolumn{1}{c|}{$1$}                                      & \multicolumn{1}{c|}{$1$}            & 0                & \footnotesize$C^1((0,1])$                       & \multicolumn{1}{c|}{1}                    & 1                  & \multicolumn{1}{c|}{\footnotesize Finite}                                                                            & \footnotesize \cite{Cico1}     \\ \midrule
			\multicolumn{1}{|c|}{$0$}                                 & \multicolumn{1}{c|}{$1$}                                      & \multicolumn{1}{c|}{$1$}            & 0                & \footnotesize$C^1((0,1])$                       & \multicolumn{1}{c|}{$\omega(x)$}          & $\Phi(x)$          & \multicolumn{1}{c|}{\footnotesize Finite}                                                             & \ \footnotesize\cite{RahulNUK3} \\ \midrule
			\multicolumn{1}{|c|}{$0$}                                 & \multicolumn{1}{c|}{$1$}                                      & \multicolumn{1}{c|}{$[1,\infty)$}   & 0                & \footnotesize$C^2((0,T])$                       & \multicolumn{1}{c|}{$\omega(x)$}          & $\Phi(x)$          & \multicolumn{1}{c|}{\begin{tabular}[c]{@{}c@{}}\footnotesize Ranging from\\ \footnotesize Finite to Infinite\end{tabular}} &  \footnotesize\cite{RahulNUK4} \\ \midrule
			\multicolumn{1}{|c|}{$[0,1)$}                             & \multicolumn{1}{c|}{$(1, \infty)$}                            & \multicolumn{1}{c|}{-}              & 0                & \footnotesize$C^1((0,1])$                       & \multicolumn{1}{c|}{1}                    & 1                  & \multicolumn{1}{c|}{\footnotesize Infinite}                                                                              & \footnotesize\cite{CSK}       \\ \midrule
			\multicolumn{1}{|c|}{\boldmath{$\Big[0, \frac{1}{2}\Big)$}} & \multicolumn{1}{c|}{\boldmath{$\left( 1, \frac{3}{2} \right)$}} & \multicolumn{1}{c|}{\textbf{-}}     & \boldmath{$[0,1)$} & \footnotesize \boldmath{$C^1((0,1])$}              & \multicolumn{1}{c|}{\boldmath{$\omega(x)$}} & \boldmath{$\Phi(x)$} & \multicolumn{1}{c|}{\footnotesize \textbf{Infinite }}                                                         & \footnotesize\textbf{**}                 \\ \bottomrule
		\end{tabular}
		\label{T1}
		\caption{Well-posedness results for singular hypebolic Cauchy problems. (**) refers to Theorem \ref{result1}  of this paper. }
	\end{table}
	
	In \cite{Cico1}, Cicognani discussed the well-posedness of (\ref{mod}) for the case $\o=\P=1$ and $p=0,q=1, \tilde\gamma=1,r=0$ in (\ref{sing}), where the author reports well-posedness in $C^\infty(\R)$ for 
	the Cauchy problem (\ref{mod}) with a finite loss of derivatives.
	Colombini et al. \cite{CSK} considered the Cauchy problem (\ref{mod}) with operator coefficients independent of $x$ and singular behavior prescribed by the parameters $p \in [0,1), q \in (1, \infty)$ and $ r=0$. They report well-posedness in Gevrey space $G^s, 1 \leq s < \frac{q-p}{q-1},$ with infinite loss of derivatives. We study in \cite{Rahul_NUK2} the case of $p=0,q \in \left( 1, \frac{3}{2}\right)$ and $r=0$ with generic structure functions $\omega$ and $\Phi$ in (\ref{sing}) and report infinite loss of both derivatives and decay.
	
	In this paper our interest is in the operator coefficients that are $L^1$ integrable in $t$ but singular near $t=0$  and are of polynomial growth in $x$. In particular, our interest is $p \in \big[0,\frac{1}{2}), q \in \left( 1, \frac{3}{2}\right), p \leq q-1 , r \in [0,1)$ and polynomial growth in $x$ prescribed by $\o, \P$ in (\ref{sing}). 
	An example of a coefficient $a(t,x)$ satisfying (\ref{sing})  is given below.
	
	\begin{exmp}
		Let $T=1$, $\kappa_1 \in[0,1]$ and $\kappa_2\in(0,1]$ such that $\kappa_1 \leq \kappa_2$. Then, 
		\begin{linenomath*}
			$$
			a(t,x) = \japx^{2\kappa_1}  \left( 2+\cos  \japx^{1-\kappa_2} \right) \left( \frac{1}{t^{1/4}} \left( 2+ \sin\left( \frac{1}{t^{1/8}}\right) \right) \right)
			$$
		\end{linenomath*}
		satisfies the estimates (\ref{sing}) for $\o = \japx^{\kappa_1}$, $\P = \japx^{\kappa_2},$ $ p= \frac{1}{4},$ $ q= \frac{11}{8}$. The example shows that singular coefficients can also have infinitely many oscillations near $t=0.$
	\end{exmp}
	
	In order to study the interplay between the singularity in time and unboundedness in space one needs to consider an appropriate metric on the phase space \cite{Lern,nicRodi}. 
	In our case, we study the Cauchy problem (\ref{mod}) by considering the metrics
	\begin{equation}\label{m1}
		g_{\Phi,k} = \P^{-2} \vert dx\vert^2 + \japxik^{-2}\vert d\xi\vert^2,
	\end{equation}  
	where $ \japxik = (k^2+|\xi|^2)^{1/2},$ for sufficiently large $k$ chosen appropriately. These metrics are discussed in Section \ref{metric}.
	From the estimates (\ref{sing}), note that $\omega$ and $\Phi$ are associated with the weight and metric respectively, and they specify the structure of the differential equation in the space variable. These functions will be discussed in detail in Section \ref{structfns}.   
	We report that the solution not only experiences a loss of derivatives but also a decay in relation to the initial datum defined in a Sobolev space modelled by the infinite order pseudodifferential operator 
	\begin{equation}\label{iopdo}
		e^{\Lambda(t)\Theta(x,D_x)}.
	\end{equation}
	Here $\Lambda \in C([0,T])$ and the symbol of the operator $\Theta(x,D_x)$ is given by $h(x, \xi)^{-1/\sigma}=$ $(\Phi(x)\langle\xi\rangle_{k})^{1/\sigma}$ where $h(x, \xi)$ is the Planck function related to the metric $g_{\Phi, k}$ in (\ref{m1}) and $3 \leq \sigma < (q-p)/(q-1)$. The operator $\Theta(x,D_x)$ explains the quantity of the loss by linking it to the metric on the phase space and the singular behavior while $\Lambda(t)$ gives a scale for the loss. Hence, we call the conjugating operator as loss operator.
	
	Our methodology relies upon two important techniques: the subdivision of the extended phase space into two regions and conjugation of a first order system corresponding to the operator $P$ in (\ref{eq1}) by the loss operator. Both these techniques result in the change of the metric governing the operator where the new metric is conformally equivalent to the one in (\ref{m1}). As seen in (\ref{root1}), the characteristic roots corresponding to the operator $P$ showcase a stronger singular behavior compared to the  principal symbol. Due to the subdivision of the phase space, this results in the change of the metric as demonstrated in Lemma \ref{lemroot}. This metric is of the form
	\begin{equation}\label{me2}
		\tilde{g}_{\Phi, k}^{(1)}=(\P\japxik)^{2\delta'} {g}_{\Phi, k},
	\end{equation}
	where $\delta'= \frac{p}{q-p}.$ On the other hand, our work in \cite[Theorem 4.0.1]{Rahul_NUK2} suggests that the conjugation by the loss operator changes the metric to
	\begin{equation}\label{me3}
		\tilde{g}_{\Phi, k}^{(2)} = (\P\japxik)^{2/\sigma} {g}_{\Phi, k}.
	\end{equation}
	As our approach to establish well-posedness is based on the energy estimates, we consider the metric $\tilde{g}_{\Phi, k}^{(2)}  = \tilde{g}_{\Phi, k}^{(1)} \vee \tilde{g}_{\Phi, k}^{(2)} $ for the application of the sharp G\r{a}rding inequality \cite[Theorem 18.6.14]{Horm}.
	
	From the energy estimate used in proving the well-posedness we derive an optimal cone condition for the solution of the Cauchy problem (\ref{eq1}) in Section \ref{cone}.  Though the characteristics of the operator $P$ in (\ref{eq1}) are singular in nature, the $L^1$ integrability of the singularity guarantees that the propagation speed is finite. The weight function governing the coefficients influences the geometry of the slope of the cone due to which the cone condition in our case is anisotropic in nature.
	
	In Section \ref{CE}, we give a set counterexamples showing how the lower order terms influence the well-posedness and the loss of regularity when $L^1$ integrability condition is violated. We cover various cases such as no loss, finite loss and nonuniqueness.

	The paper is organized as follows. In Section \ref{tools}, we describe the tools necessary for our analysis.
	In Section \ref{stmt}, we define a Cauchy problem of our interest and state the well-posedness result whose proof will be presented in Section \ref{Proof1}. 
	In Section \ref{Symbol classes}, we define appropriate generalized parameter dependent symbol classes.
	In Section \ref{cone} we derive a cone condition, while in Section \ref{CE} we provide the set of examples.

	\section{Tools}\label{tools}
	
	In this section we introduce the main tools of this paper. The first part is devoted to a class of metrics on the phase space that govern the geometry of the symbols in our consideration while the second for properties of structure functions. In the third part, we define Sobolev spaces associated to the metric.  Lastly, we device a localization technique based on the Planck function associated to the metric.
	
	\subsection{Our Choice of Metric on the Phase Space}\label{metric}
	
	In this section, we review some notation and terminology used in the study of metrics on the phase space, see \cite[Chapter 2]{Lern} and \cite{nicRodi} for details. Let us denote by $\sigma(X,Y)$ the standard symplectic form on $T^*\R^n\cong \R^{2n}$: if $X=(x,\xi)$ and $Y=(y,\eta)$, then $\sigma$ is given by
	\begin{linenomath*}
		\[
		\sigma(X,Y)=\xi \cdot y - \eta \cdot x.
		\]	
	\end{linenomath*}
	We can identify $\sigma$ with the isomorphism of $\R^{2n}$ to $\R^{2n}$ such that $\sigma^*=-\sigma$, with the formula $\sigma(X,Y)= \langle \sigma X,Y\rangle$. Consider a Riemannian metric $g_X$ on $\R^{2n}$ (which is a measurable function of $X$) to which we associate the dual metric $g_X^\sigma$ by
	\begin{linenomath*}
		\[ 
		g_X^\sigma(Y)= \sup_{0 \neq Y' \in \R^{2n}} \frac{\langle \sigma Y,Y'\rangle^2}{g_X(Y')}, \quad \text{ for all } Y \in \R^{2n}.
		\]
	\end{linenomath*}
	
	Considering $g_X$ as a matrix associated to positive definite quadratic form on $\R^{2n}$, $g_X^\sigma=\sigma^*g_X^{-1}\sigma$.
	We define the Planck function \cite{nicRodi} which plays a crucial role in the development of pseudodifferential calculus as
	\begin{linenomath*}
		\[ 
		h_g(x,\xi) := \sup_{0\neq Y \in \R^{2n}} \Bigg(\frac{g_X(Y)}{g_X^\sigma(Y)}\Bigg)^{1/2}.
		\]
	\end{linenomath*}
	The uncertainty principle is quantified as the upper bound $h_g(x,\xi)\leq 1$. In the following, we often make use of the strong uncertainty principle, that is, for some $\kappa>0$, we have
	\begin{linenomath*}
		\[
		h_g(x,\xi) \leq (1+|x|+|\xi|)^{-\kappa}, \quad (x,\xi)\in \R^{2n}.
		\]
	\end{linenomath*}
	In general, we use the metrics of the form
	\begin{equation}\label{m2}
		g_{\Phi,k}^{\rho,r} = \left( \frac{\japxik^{\rho_2}}{\P^{\tilde\rho_1}} \right)^2 |dx|^2 +  \left( \frac{\P^{\tilde\rho_2}}{\japxik^{\rho_1}} \right)^2 |d\xi|^2.
	\end{equation}
	Here $\rho=(\rho_1,\rho_2)$ , $\tilde\rho=(\tilde\rho_1,\tilde\rho_2)$ for $\rho_j,\tilde\rho_j \in [0,1]$, $j=1,2$ are such that $0 \leq \rho_2<\rho_1 \leq 1$ and $0 \leq \tilde\rho_2<\tilde\rho_1 \leq 1$. 
	The Planck function associated to the metric in (\ref{m2}) is  $\P^{\tilde\rho_2-\tilde\rho_1} \japxik^{\rho_2-\rho_1}.$
	
	\subsection{Properties of the Structure Functions $\o$ and $\P$}\label{structfns}
	The functions $\o$ and $\P$ are associated with weight and metric respectively. They specify the structure of the differential equation. As pseudodifferential calculus is the datum of the metric satisfying some local and global conditions. In our case, it amounts to the conditions on $\Phi$. The symplectic structure and the uncertainty principle also play a natural role in the constraints imposed on $\Phi$. So we consider $\Phi$ to be a monotone increasing function of $|x|$ satisfying the following conditions: 
	\begin{linenomath*}
		\begin{alignat*}{3}
			1 \; \leq & \quad \Phi(x) &&\lesssim  1+|x| && \quad \text{(sub-linear)} \\
			\vert x-y \vert \; \leq & \quad r\Phi(y) && \implies C^{-1}\Phi(y)\leq \Phi(x) \leq C \Phi(y)  && \quad \text{(slowly varying)} \\
			&\Phi(x+y) && \lesssim  \Phi(x)(1+|y|)^s && \quad \text{(temperate)}
		\end{alignat*}
	\end{linenomath*}
	for all $x,y\in\R^n$ and for some $r,s,C>0$.
	
	For the sake of calculations arising in the development of symbol calculus related to the metrics $g_{\Phi,k}$, we need to impose following additional conditions:
	\begin{linenomath*}
		\begin{alignat*}{3}
			|\Phi(x) - \Phi(y)| \leq & \Phi(x+y) && \leq \Phi(x) + \Phi(y),  && \quad  (\text{Subadditive})\\
			& |\partial_x^\beta \Phi(x)| && \lesssim \Phi(x) \japx^{-|\beta|}, \\
			&\Phi(ax) &&\leq a\P, \text{ if } a>1,\\
			& a\P &&\leq \Phi(ax), \text{ if } a \in [0,1],
		\end{alignat*}
	\end{linenomath*}
	where $\beta \in \mathbb{Z}_+^n$. It can be observed that the above conditions are quite natural in the context of symbol classes. In our work, we need even the weight function $\omega$ to satisfy the above stated properties of $\Phi$. In order to arrive at an energy estimate using the Sharp G\r{a}rding inequality (see Section \ref{energy} for details), we impose the following condition 
	\begin{linenomath*}
		\[
		\o \lesssim \P, \quad x \in \R^n.
		\] 
	\end{linenomath*}
	
	\subsection{Sobolev Spaces}
	We now introduce the Sobolev space related to the metric $g_{\Phi, k}$ that is suitable for our analysis.
	
	\begin{defn}\label{Sobo} 
		The Sobolev space $H_{\Phi, k}^{s, \varepsilon, \sigma}\left(\R^{n}\right)$ for $\sigma>2, \varepsilon \geq 0$ and $s=\left(s_{1}, s_{2}\right) \in \R^{2}$ is defined as
		$$
		H_{\Phi, k}^{s, \varepsilon, \sigma}\left(\R^{n}\right)=\left\{v \in L^{2}\left(\mathbb{R}^{n}\right): \Phi(x)^{s_{2}}\la D\rak^{s_{1}} \exp \{\varepsilon\left(\Phi(x) \la D_{x}\rak \right)^{1 / \sigma}\} v \in L^{2}\left(\mathbb{R}^{n}\right)\right\}
		$$
		equipped with the norm $\|v\|_{\Phi, k ; s, \varepsilon, \sigma}= \|\Phi(\cdot)^{s_{2}}\la D\rak^{s_{1}} \exp \{\varepsilon\left(\Phi(\cdot)\la D\rak \right)^{1 / \sigma}\} v \|_{L^{2}}$. The operator $\exp \{\varepsilon\left(\Phi(x) \la D_{x} \rak \right)^{1 / \sigma} \}$ is an infinite order pseudodifferential operator with the Fourier multiplier $\exp \{\varepsilon\left( \Phi(x)\japxik \right)^{1 / \sigma} \}$.
	\end{defn}
	
	The subscript $k$ in the notation $H_{\Phi, k}^{s, \varepsilon, \sigma}\left(\R^{n}\right)$ is related to the parameter in the operator $\la D \ra_k = (k^2 - \Delta_x)^{1/2}.$ 
	
	\subsection{Subdivision of the Phase Space}\label{zones}
	One of the main tools in our analysis is the division of the extended phase space $J=\J,$ where $T>0,$ into two regions using the Planck function $h(x,\xi)=(\P \japxik)^{-1}$ of the metric $g_{\Phi,k}$ in (\ref{m1}). We use these regions in the proof of Theorem \ref{result1} (see Section \ref{factr}) to handle the low regularity in $t$. To this end we define the time splitting point $t_{x,\xi}$, for a fixed $(x,\xi)$, as the solution to the equation
	\begin{linenomath*}
		\[
		t^{q-p}=N\h,
		\]
	\end{linenomath*}
	where $N$ is the positive constant chosen appropriately later.  Since $3 \leq \sigma < \frac{q-p}{q-1},$ we consider $\delta \in (0,1)$ such that
	\begin{equation}\label{delta}
		\frac{1}{\sigma} = \frac{q-1+\delta}{q-p}.
	\end{equation}
	Implying
	\begin{equation}\label{gamma}
		\gamma := 1- \frac{1}{\sigma} = \frac{1-\delta-p}{q-p}.
	\end{equation}
	Using $t_{x,\xi}$ and (\ref{gamma}) we define the interior region
	\begin{linenomath*}
		\begin{equation} \label{zone1}
			\begin{aligned}
				\pd &=\{(t,x,\xi)\in J : 0 \leq t \leq t_{x,\xi}, \; |x| + |\xi| > N\}\\
				&= \{(t,x,\xi)\in J : t^{1-\delta-p} \leq N^\gamma \h^{\gamma}, \; |x| + |\xi| >  N\}
			\end{aligned}
		\end{equation}
	\end{linenomath*}
	and the exterior region
	\begin{linenomath*}
		\begin{equation} \label{zone2}
			\begin{aligned}
				\hyp &=\{(t,x,\xi)\in J : t_{x,\xi} < t \leq T, \; |x| + |\xi| >  N\}\\
				&= \{(t,x,\xi)\in J : t^{1-\delta-p} > N^\gamma \h^{\gamma}, \; |x| + |\xi| >  N\}.
			\end{aligned}
		\end{equation}
	\end{linenomath*}	
	We use these regions to define the parameter dependent global symbol classes in Section \ref{Symbol classes}.

	\section{Statement of the Main Result}\label{stmt}
	We consider the Cauchy problem 
	\begin{linenomath*}
		\begin{equation}
			\begin{cases}
				\label{eq1}
				P(t,x,\partial_t,D_x)u(t,x)= f(t,x), \qquad D_x = -i\nabla_x,\;(t,x) \in (0,T] \times \R^n, \\
				u(0,x)=f_1(x), \quad \partial_tu(0,x)=f_2(x),
			\end{cases}
		\end{equation}
	\end{linenomath*}
	with the strictly hyperbolic operator $P(t,x,\partial_{t},D_{x}) = \partial_t^2 + b_0(t,x)\partial_t+ a(t,x,D_x)+ b(t,x,D_x)$ where
	\begin{linenomath*}			
		\[
		a(t,x,\xi)  = \sum_{i,j=1}^{n} a_{i,j}(t,x)\xi_i\xi_j \quad \text{ and } \quad
		b(t,x,\xi)  = i\sum_{j=1}^{n} b_{j}(t,x)\xi_j + b_{n+1}(t,x).
		\]
	\end{linenomath*}
	Here, the matrix $(a_{i,j}(t,x))$ is real symmetric for all $(t,x)\in (0,T] \times \R^n$, $a_{i,j} \in L^1((0,T];C^\infty(\R^n)) \cap C^1((0,T];C^\infty(\R^n))$ and $b_j \in L^1((0,T];C^\infty(\R^n))$. We have the following assumptions on $a(t,x,\xi)$, $b(t,x,\xi)$ and $b_j(t,x), j=0,n+1$ :
	\begin{linenomath*}
		\begin{equation}
			\label{conds}
			\begin{rcases}
				\begin{aligned}
					a(t,x,\xi) &\geq C_0 \o^2 \japxik^2, \quad C_0>0, \\
					\vert \partial_\xi^\alpha \partial_x^\beta a(t,x,\xi) \vert &\leq C^{|\alpha|+|\beta|} \alpha! (\beta!)^{\sigma} \frac{1}{t^{p}} \o^2 \P^{-\vert \beta \vert}\japxik^{2-\vert \alpha \vert},\\
					\vert \partial_\xi^\alpha \partial_x^\beta \partial_t a(t,x,\xi) \vert & \leq C^{|\alpha|+|\beta|} \alpha! (\beta!)^{\sigma} \frac{1}{t^{q}} \o^2 \P^{-\vert \beta \vert} \japxik^{2-\vert \alpha \vert}, \qquad \\
					\vert \partial_\xi^\alpha \partial_x^\beta b(t,x,\xi) \vert &\leq C^{|\alpha|+|\beta|} \alpha! (\beta!)^{\sigma}  \frac{1}{t^{r}}  \o \P^{-\vert \beta \vert}\japxik^{1-\vert \alpha \vert},\\
					|\partial_x^\beta b_j(t,x)| &\leq C^{|\beta|} (\beta!)^{\sigma} \frac{1}{t^r} \P^{-|\beta|}, \quad j=0,n+1,
				\end{aligned}
			\end{rcases}
		\end{equation}
	\end{linenomath*}		
	$q \in \left(1,\frac{3}{2}\right), p \leq q-1, p \in \big[0,\frac{1}{2} \big), r \in \big[0,1\big), 3 \leq \sigma < (q-p)/(q-1)$ and $ (t,x,\xi) \in [0,T] \times \R^n\times \R^n$. Note that $C>0$ is a generic constant.

	We now state the main result of this paper.
	Let $e=(1,1).$
	
	\begin{thm}\label{result1}
		Consider the strictly hyperbolic Cauchy problem  (\ref{eq1}) satisfying the conditions in (\ref{conds}). Let the initial data $f_j$ belong to $H^{s+(2-j)e, \Lambda_1,\sigma}_{\Phi,k}$ and the right hand side $f \in C([0,T];H^{s,\Lambda_2,\sigma}_{\Phi,k}),$ $\Lambda_j>0,j=1,2$.
		Then, there exist a continuous function $\Lambda(t)$ and positive constants $ \Lambda_0$ and $\delta^*,$ such that there is a unique solution
		\begin{linenomath*}
			\[
			u \in C\left([0,T];H^{s+e,\Lambda(t),\sigma}_{\Phi,k}\right)\bigcap C^{1}\left([0,T];H^{s,\Lambda(t),\sigma}_{\Phi,k}\right),
			\]
		\end{linenomath*}
		for $\Lambda(t) < \Lambda^*= \min\{ \Lambda_0,  \Lambda_1,  \Lambda_2 \}.$ More specifically, the solution satisfies an a priori estimate		
		\begin{linenomath*}
			\begin{equation}
				\begin{aligned}
					\label{est2}
					\sum_{j=0}^{1} \Vert \partial_t^ju(t,\cdot) &\Vert_{\Phi,k;s+(1-j)e, \Lambda(t),\sigma} \\
					&\leq C \Bigg(\sum_{j=1}^{2} \Vert f_j\Vert_{\Phi,k;s+(2-j)e,\Lambda(0),\sigma} 
					+ \int_{0}^{t}\Vert f(\tau,\cdot)\Vert_{\Phi,k;s,\Lambda(\tau),\sigma}\;d\tau\Bigg)
				\end{aligned}
			\end{equation}
		\end{linenomath*}
		where $0 \leq t \leq T \leq (\delta^*\Lambda^*/\lambda)^{1/\delta^*}, \; C=C_s>0$ and $\Lambda(t)= \frac{\lambda}{\delta^*} \left( T^{\delta^*} - t^{\delta^*}\right)$ for a sufficiently large $\lambda$.				
	\end{thm}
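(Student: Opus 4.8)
The plan is to follow the classical scheme for singular strictly hyperbolic equations --- reduction to a first order pseudodifferential system, diagonalization of the principal part, conjugation by an infinite order operator producing a favourable sign, and a Gronwall argument for a metric-adapted energy --- carried out throughout within the global pseudodifferential calculus attached to the metric $g_{\Phi,k}$ of (\ref{m1}) and to its conformal rescalings $\tilde g_{\Phi,k}^{(1)}$, $\tilde g_{\Phi,k}^{(2)}$ of (\ref{me2})--(\ref{me3}). The two essential devices are the subdivision of $[0,T]\times\R^{2n}$ into $\pd$ and $\hyp$, which converts the $t$-blow-up of the characteristic roots into a gain of a power of the Planck function $\h=(\P\japxik)^{-1}$, and the conjugation by the loss operator $e^{\Lambda(t)\Theta(x,D_x)}$, whose generator $\Theta$ has symbol $(\P\japxik)^{1/\sigma}$; the interplay of these two, tuned by the relations (\ref{delta})--(\ref{gamma}), is precisely what pins down the admissible range $3\leq\sigma<(q-p)/(q-1)$.

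First I would factor $P$. By strict hyperbolicity $a(t,x,\xi)\geq C_0\o^2\japxik^2$, so the characteristic roots $\tau_\pm(t,x,\xi)=\pm\sqrt{a(t,x,\xi)}$ are real, separated and elliptic of order one; one checks (Lemma \ref{lemroot}) that $\partial_t\tau_\pm$ behaves like $t^{-q}$ relative to a symbol bounded by $t^{-p}$, which is exactly why $\partial_t\tau_\pm$ is naturally measured in the rescaled metric $\tilde g_{\Phi,k}^{(1)}$ with $\delta'=p/(q-p)$. Introducing the energy vector $U=(U_1,U_2)$, with $U_1$ a first order elliptic image of $u$ carrying the weights of $H^{s+e}_{\Phi,k}$ and $U_2$ of $D_tu$-type carrying those of $H^{s}_{\Phi,k}$, chosen so as to symmetrize the principal part, the Cauchy problem (\ref{eq1}) becomes a first order system $D_tU=\mathcal A(t,x,D_x)U+\mathcal B(t,x,D_x)U+F$, where $\mathcal A$ carries the diagonalizable principal part (containing $\tau_\pm$ and $b_0$) and $\mathcal B$ collects the genuinely lower order terms and the commutator errors, all of them $O(t^{-r})$ and hence integrable in $t$ since $r<1$. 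Strict hyperbolicity then furnishes an approximate diagonalizer $M(t,x,D_x)$, elliptic in the appropriate class, with $M^{-1}\mathcal A M$ diagonal modulo a term of lower order.

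Next comes the zone analysis and the conjugation. On the exterior region $\hyp=\{t^{q-p}>N\h\}$ all symbolic estimates are uniform in $t$, whereas on the interior region $\pd=\{t^{q-p}\leq N\h\}$, where the singularity is strongest, I would invoke the defining inequality $t^{1-\delta-p}\leq N^{\gamma}\h^{\gamma}$ from (\ref{zone1}), $\gamma=1-1/\sigma$, to dominate the $t$-blow-up by a gain of $\h^{\gamma}=(\P\japxik)^{-\gamma}$, which is exactly the order of the term produced by the loss operator. Conjugating the diagonalized system by $e^{\Lambda(t)\Theta(x,D_x)}$ produces on the diagonal the crucial summand $-\dot\Lambda(t)\,\Theta(x,D_x)$; since $\dot\Lambda(t)=-\lambda t^{\delta^*-1}<0$, this is, up to lower order, a nonnegative operator of order $1/\sigma$, while the conjugation simultaneously rescales the ambient metric to $\tilde g_{\Phi,k}^{(2)}$. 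Setting $\tilde g_{\Phi,k}=\tilde g_{\Phi,k}^{(1)}\vee\tilde g_{\Phi,k}^{(2)}$ (which equals $\tilde g_{\Phi,k}^{(2)}$ for $\sigma$ in the stated range, as $p\leq q-1$), I would apply the sharp G\r{a}rding inequality \cite[Theorem 18.6.14]{Horm} in this metric to the real part of the remaining terms of indefinite sign.

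Finally, with $v=e^{\Lambda(t)\Theta}M^{-1}U$ and $E(t)=\|v(t,\cdot)\|_{L^2}^2$ --- equivalent, up to constants, to $\sum_{j=0}^1\|\partial_t^ju(t,\cdot)\|_{\Phi,k;s+(1-j)e,\Lambda(t),\sigma}^2$ --- differentiation in $t$, the system equation, the sharp G\r{a}rding bound and the sign of $-\dot\Lambda\Theta$ give $\frac{d}{dt}E(t)\leq\psi(t)E(t)+C\|F(t,\cdot)\|_{L^2}^2$ with $\psi\in L^1(0,T)$ (coming from the $t^{-p}$, $t^{-q}$ and $t^{-r}$ contributions, the $t^{-q}$ one being absorbed on $\pd$ by $-\dot\Lambda\Theta$ once $\lambda$, and then $N$, are taken large enough). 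Gronwall's inequality then yields the a priori estimate (\ref{est2}) for $0\leq t\leq T$ with $T$ constrained by $\Lambda(0)=\frac{\lambda}{\delta^*}T^{\delta^*}\leq\Lambda^*$, i.e. $T\leq(\delta^*\Lambda^*/\lambda)^{1/\delta^*}$; existence of $u$ in the asserted spaces follows by a standard approximation and weak-compactness argument on regularized problems, and uniqueness by applying (\ref{est2}) to the difference of two solutions. I expect the principal obstacle to be precisely the reconciliation of the two metric changes: verifying that every commutator and remainder produced by the diagonalizer $M$, by the infinite order conjugation, and by the $\pd/\hyp$ decomposition actually lands in symbol classes governed by the single metric $\tilde g_{\Phi,k}^{(1)}\vee\tilde g_{\Phi,k}^{(2)}$, so that the sharp G\r{a}rding inequality is genuinely applicable, together with the quantitative matching on $\pd$ of the $\gamma$-power gain in $\h$ against the $t$-blow-up of $\partial_t\tau_\pm$, which is what forces the range of $\sigma$.
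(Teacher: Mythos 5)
Your overall architecture --- factorization into first--order factors, reduction to a $2\times 2$ system, conjugation by $e^{\Lambda(t)\Theta(x,D_x)}$, sharp G\r{a}rding in the metric $\tilde g_{\Phi,k}^{(2)}=\tilde g_{\Phi,k}^{(1)}\vee\tilde g_{\Phi,k}^{(2)}$, and Gronwall with the $L^1$ weight $t^{\delta^*-1}$ --- is the same as the paper's, and your treatment of the exterior region and of the constraint $\Lambda(0)\leq\Lambda^*$ forcing $T\leq(\delta^*\Lambda^*/\lambda)^{1/\delta^*}$ matches. But there is one genuine gap: you take the characteristic roots to be $\pm\sqrt{a(t,x,\xi)}$ itself and claim that on the interior region the defining inequality $t^{1-\delta-p}\leq N^\gamma \h^\gamma$ lets you ``dominate the $t$-blow-up by a gain of $\h^{\gamma}$''. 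That inequality runs in the opposite direction: it bounds a positive power of $\P\japxik$ by a negative power of $t$, so it converts excess \emph{symbol order} into a $t$-power, not excess \emph{$t$-singularity} into symbol decay. Indeed, in $\pd$ one has $\h^{-1}\leq N\,t^{-(q-p)}$, so the favourable term $-\dot\Lambda(t)\Theta\sim\lambda t^{\delta^*-1}(\P\japxik)^{1/\sigma}$ is bounded above by $\lambda N^{1/\sigma}t^{\delta^*-1-(q-1+\delta)}$ and cannot absorb a coefficient of size $t^{-q}$ there; the absorption $t^{-q}\lesssim\lambda t^{\delta^*-1}(\P\japxik)^{1/\sigma}$ requires $t^{q-p}\gtrsim\h$, i.e.\ it works only in $\hyp$. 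Since $\partial_t\sqrt{a}\sim t^{-q}\notin L^1$ and carries no sign, your diagonalized system retains, on $\pd$, a zeroth-order term that neither the loss operator nor an $L^1$ Gronwall weight controls.

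The ingredient you are missing is the excision (\ref{exci}): the principal symbol is replaced in $Z_{int}(2)$ by the $t$-independent symbol $\o^2\japxik^2$, so that $\tau=\sqrt{\tilde a}$ satisfies $\partial_t\tau\sim 0$ on the interior region and the $t^{-q}$ singularity of $\partial_t\tau$ survives only on the exterior region, where it is traded for $(\P\japxik)^{1/\sigma}$ times the integrable factor $t^{-(1-\delta)}$ (Lemma \ref{lemroot}, item (iv)). The discrepancy $a-\tilde a$ is then a compactly supported (in $t$, for fixed $(x,\xi)$), $L^1$-in-$t$ perturbation by (\ref{diff}), and it is for \emph{this} term --- not for $\partial_t\tau$ --- that the interior inequality is used, in its correct direction, to lower the symbol order to $1+1/\sigma$ at the price of the integrable weight $t^{-(1-\delta)}$ (Lemma \ref{lem2}, item (i)). Without this regularization of the roots near $t=0$ your energy argument does not close on $\pd$; with it, the remainder of your plan goes through essentially as in the paper.
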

	
	\begin{rmk}
		Observe that we have $3 \leq \sigma < (q-p)/(q-1)$ where as in \cite[Theorem 2]{CSK}, it is $1 \leq \sigma < (q-p)/(q-1)$. The increase in the lower bound for $\sigma$ is due to the application of sharp G\r{a}rding inequality in our context  that dictates $\sigma \geq 3$. This is discussed in Section \ref{energy}. Due to this increment in $\sigma$, we have $ q \in  \Big(1, \frac{3}{2} \Big).$
	\end{rmk}
	
	\section{Parameter Dependent Global Symbol Classes}	\label{Symbol classes}
	We now define certain parameter dependent global symbols that are associated with the study of the Cauchy problem (\ref{eq1}). Let $m=(m_1,m_2)\in \mathbb{R}^2$. Consider the metrics $g_{\Phi,k}$, $\tilde g_{\Phi,k}^{(1)}$ and $\tilde g_{\Phi,k}^{(2)}$ as in (\ref{m1}) (\ref{me2}) and (\ref{me3}). These metrics can be related to the metrics of the form (\ref{m2}) where $(\rho,\tilde\rho)$ are $((1,0),(1,0)),$ $((1-p/(q-p),p/(q-p)),(1-p/(q-p),p/(q-p)))$ and $((1-1/\sigma, 1/\sigma),(1-1/\sigma, 1/\sigma)).$
	
	\begin{defn}
		$G^{m_1,m_2}(\omega,g_{\Phi,k}^{\rho,\tilde\rho})$ is the space of all functions $a \in C^\infty(\mathbb{R}^{2n})$ satisfying 
		\begin{linenomath*}
			\begin{equation}
				\label{sym1}
				   |\partial_\xi^\alpha  D_x^\beta a(x,\xi)| \leq C_{\alpha\beta} \japxik^{m_1-\rho_1|\alpha|+\rho_2|\beta|} \o^{m_2} \P^{-\tilde\rho_1|\beta|+\tilde\rho_2|\alpha|}
			\end{equation}	
		\end{linenomath*}		
	\end{defn}
	Since $g_{\Phi,k} \leq \tilde g_{\Phi,k}^{(1)} \leq \tilde g_{\Phi,k}^{(2)},$ we have
	\[
		G^{m_1,m_2}(\omega,g_{\Phi,k}) \subset G^{m_1,m_2}(\omega, \tilde g_{\Phi,k}^{(1)}) \subset G^{m_1,m_2}(\omega,\tilde g_{\Phi,k}^{(2)} ).
	\]
	Let $\mu \geq 1$ and $\nu \geq 1$.
		\begin{defn}
		$AG_{\mu,\nu}^{m_1,m_2}(\omega,g_{\Phi,k}^{\rho,\tilde\rho})$ is the space of all functions $a \in C^\infty(\mathbb{R}^{2n})$ satisfying (\ref{sym1}) with $C_{\alpha\beta} = C^{|\alpha| + |\beta|} (\alpha!)^{\mu} (\beta!)^{\nu}$ for some $C>0.$ 
	\end{defn}
	
	\begin{defn}
		$AG_{\sigma}^{m_1,m_2}(\omega,g_{\Phi,k}^{\rho,\tilde\rho})$ is the space of all functions $a \in C^\infty(\mathbb{R}^{2n})$ satisfying (\ref{sym1}) when $\h \leq C_1|\alpha|^{-\sigma}$ with $C_{\alpha\beta} = C_2^{|\alpha| + |\beta|} (\alpha!) (\beta!)^{\sigma}$ for some positive constants $C_1,C_2>0$
	\end{defn}
	
	We denote the set of operators with symbols in $G^{m_1,m_2}(\omega,g_{\Phi,k}^{\rho,\tilde\rho})$ and $AG_{\sigma}^{m_1,m_2}(\omega,g_{\Phi,k}^{\rho,\tilde\rho})$ by $OPG^{m_1,m_2}(\omega,g_{\Phi,k}^{\rho,\tilde\rho})$ and $OPAG_{\sigma}^{m_1,m_2}(\omega,g_{\Phi,k}^{\rho,\tilde\rho}),$ respectively. As far as the calculi of these pseudodifferential operators are concerned we refer to \cite[Appendix II \& III]{RahulNUK2}, \cite[Section 6.3]{nicRodi} and \cite[Appendix]{AscaCappi2}.
	
	In our analysis, we require the following conjugation result.
	\begin{prop}\label{conju}
		Let $e^{\Lambda(t) \Theta(x,D_x)} $ be as in (\ref{iopdo}) and $a(x,\xi) \in AG_{\sigma,\sigma}^{m_1,m_2}(\omega,\tilde g_{\Phi,k}^{(1)})$. Then, there exists $\Lambda_{0}>0$ such that for $\Lambda(t)>0$ with $\Lambda(t)<\Lambda_{0}$,
		$$
			e^{\Lambda(t) \Theta(x,D_x)} a(x, D_x) e^{-\Lambda(t) \Theta(x,D_x)}
			=a(x, D_x) + \sum_{j=1}^{3} r_{\Lambda}^{(j)}\left(t, x, D_{x}\right)
		$$
		where the symbols of $r_{\Lambda}^{(j)}\left(t, x, D_{x}\right)$ for $j=1,2,3$ are  in $C([0, T]; AG_{\sigma,\sigma}^{-\infty, m_{2}-\gamma} (\omega,\tilde g_{\Phi,k}^{(2)}) ),$ $ C([0, T]; AG_{\sigma,\sigma}^{ m_{1}-\gamma,-\infty} (\omega,\tilde g_{\Phi,k}^{(2)}) ),$ and $ C([0, T]; AG_{\sigma,\sigma}^{ -\infty,-\infty} (\omega,\tilde g_{\Phi,k}^{(2)}) ),$ respectively.
	\end{prop}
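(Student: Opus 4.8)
The plan is to expand the conjugation $e^{\Lambda\Theta} a(x,D_x) e^{-\Lambda\Theta}$ via a telescoping/Volterra argument in $\Lambda$ and to control the resulting commutator terms using the symbolic calculus of the classes $AG_{\sigma,\sigma}$ together with the precise structure of the generator $\Theta(x,D_x)$, whose symbol is $h(x,\xi)^{-1/\sigma}=(\Phi(x)\langle\xi\rangle_k)^{1/\sigma}=(\Phi(x)\langle\xi\rangle_k)^{\gamma'}$ with $1/\sigma = (q-1+\delta)/(q-p)$ and $\gamma=1-1/\sigma$ as fixed in \eqref{delta}--\eqref{gamma}. First I would write, following the standard device (see \cite[Theorem 4.0.1]{Rahul_NUK2}),
\begin{linenomath*}
	\begin{equation*}
		\frac{d}{ds}\Big( e^{s\Lambda\Theta} a(x,D_x) e^{-s\Lambda\Theta}\Big) = \Lambda\, e^{s\Lambda\Theta}\,[\Theta, a(x,D_x)]\, e^{-s\Lambda\Theta},
	\end{equation*}
\end{linenomath*}
so that $e^{\Lambda\Theta}a(x,D_x)e^{-\Lambda\Theta} = a(x,D_x) + \Lambda\int_0^1 e^{s\Lambda\Theta}[\Theta,a(x,D_x)]e^{-s\Lambda\Theta}\,ds$, and then analyze the commutator. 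The key symbolic fact is that the principal part of the commutator $[\Theta, a(x,D_x)]$ has symbol given, up to lower order terms, by the Poisson bracket $\{h^{-1/\sigma}, a\} = \partial_\xi(h^{-1/\sigma})\cdot\partial_x a - \partial_x(h^{-1/\sigma})\cdot\partial_\xi a$. Since $\partial_\xi(\Phi\langle\xi\rangle_k)^{1/\sigma}$ gains a factor $\langle\xi\rangle_k^{1/\sigma - 1}\Phi^{1/\sigma}$ and $\partial_x a$ costs $\Phi^{-1}\langle\xi\rangle_k^{m_1}\o^{m_2}$-type weights (with the extra $(\Phi\langle\xi\rangle_k)^{\delta'}$ room coming from the fact that $a\in AG_{\sigma,\sigma}^{m_1,m_2}(\omega,\tilde g_{\Phi,k}^{(1)})$ rather than the finer metric), each bracket lowers the total order by $\gamma = 1-1/\sigma$ in exactly one of the two slots, which is the source of the three remainder types: one carrying $-\infty$ in $\xi$-order (from repeated $\partial_\xi$-hitting of $\Theta$), one carrying $-\infty$ in $x$-order, and the mixed one carrying $-\infty$ in both. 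The Gevrey-type bounds on $a$ and the elementary derivative estimates on $\Phi$, $\o$ listed in Section \ref{structfns} ensure the factorial growth of the constants stays within $AG_{\sigma,\sigma}$, and the localization $h\leq C_1|\alpha|^{-\sigma}$ built into the definition of $AG_\sigma$ is precisely what makes the infinite-order operator $e^{\Lambda\Theta}$ act boundedly between these classes for $\Lambda<\Lambda_0$.

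Concretely, the steps I would carry out are: (i) establish that $\Theta(x,D_x)\in OPAG_{\sigma,\sigma}^{1/\sigma,1/\sigma}(\omega,g_{\Phi,k})$ modulo harmless terms, using the conditions on $\Phi$ and $\langle\xi\rangle_k$; (ii) compute $[\Theta,a(x,D_x)]$ by the asymptotic expansion of the symbol of a composition in the calculus of $OPAG_{\sigma,\sigma}$ over the metric $\tilde g_{\Phi,k}^{(2)}$, isolating the leading Poisson-bracket term and the two "pure" derivative families; (iii) show by induction on the number of brackets that the $N$-fold iterated commutator $\mathrm{ad}_\Theta^N(a)$ lies in a class whose order has dropped by $N\gamma$ in the appropriate slot(s), with constants $C^N (N!)^{\sigma-1}$ or better, so that $\Lambda^N/N!$ times these is summable for $\Lambda<\Lambda_0$; (iv) resum the series, grouping terms according to which slot(s) saw the order drop to $-\infty$, to obtain $r_\Lambda^{(1)}, r_\Lambda^{(2)}, r_\Lambda^{(3)}$; (v) verify the continuity in $t$, which is immediate since $\Lambda(t)\in C([0,T])$ and the construction depends on $t$ only through $\Lambda(t)$. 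The conjugated copies $e^{s\Lambda\Theta}(\cdot)e^{-s\Lambda\Theta}$ appearing inside the Volterra integral are handled by the same estimates applied uniformly in $s\in[0,1]$.

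The main obstacle I anticipate is step (iii): controlling the Gevrey-order (factorial) growth of the seminorm constants through the iterated commutators while simultaneously tracking the order decay $N\gamma$. Each commutator both differentiates $a$ (producing the $(\alpha!)^\sigma(\beta!)^\sigma$-type growth that must be reabsorbed) and multiplies by derivatives of $(\Phi\langle\xi\rangle_k)^{1/\sigma}$, and one must check that the combinatorics of the symbolic expansion — in particular the number of terms at level $N$ and the loss from each application of the composition formula over the metric $\tilde g_{\Phi,k}^{(2)}$, whose Planck function is $(\Phi\langle\xi\rangle_k)^{-\gamma}$ — does not overwhelm the gain $\Lambda^N/N!$. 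This is exactly where the lower bound $\sigma\geq 3$ and the restriction $\h\leq C_1|\alpha|^{-\sigma}$ in the definition of $AG_\sigma$ are used; the estimate is essentially the one behind \cite[Theorem 4.0.1]{Rahul_NUK2}, adapted to the conformal change from $\tilde g_{\Phi,k}^{(1)}$ to $\tilde g_{\Phi,k}^{(2)}$, and most of the work is bookkeeping of these weights rather than a genuinely new difficulty.
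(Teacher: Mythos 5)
Your proposal is correct in spirit and ultimately takes the same route as the paper: the paper's entire proof consists of the observation that $\tilde g_{\Phi,k}^{(1)} \leq \tilde g_{\Phi,k}^{(2)}$ together with a citation of \cite[Theorem 4.0.1]{Rahul_NUK2}, which is exactly the reduction you invoke at the end of your argument. Your Volterra expansion and commutator bookkeeping are a plausible reconstruction of the internals of that cited theorem, but none of that machinery is reproduced or needed in this paper's own one-line proof.
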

	\begin{proof}
		Noting the fact that $\tilde g_{\Phi,k}^{(1)} \leq \tilde g_{\Phi,k}^{(2)}$, the proof follows in similar lines to \cite[Theorem 4.0.1]{Rahul_NUK2}. 
	\end{proof}

	Observe that the derivatives of $\sqrt{a(t,x,\xi)},$ characteristic roots of operator $P$ in (\ref{eq1}) show stronger singular behaviour compared to $a(t,x,\xi)$ due to the singularity. Thus, to handle the singular behavior of the characteristics, we have the following symbol classes.
	\begin{defn}
		$AG_{\sigma}^{m_1,m_2}\{l;\delta_1\}^{(1)}_{N}(\omega,g_{\Phi,k}^{\rho,\tilde\rho})$ for $l \in \R$ and $\delta_1\in[0,1)$is the space of all functions $a \in C^1((0,T];G^{m_1,m_2}(\omega,g_{\Phi,k}^{\rho,\tilde\rho}))$ satisfying 
		\begin{linenomath*}
			\begin{equation}\label{sym2}
				\vert \partial_\xi^\alpha D_x^\beta a(t,x,\xi) \vert  \leq C^{|\alpha|+ |\beta|} \alpha! (\beta!)^{\sigma} \japxik^{m_1-\rho_1|\alpha|+\rho_2|\beta|} \o^{m_2} \P^{-\tilde\rho_1|\beta|+\tilde\rho_2|\alpha|} \left(\frac{1}{t}\right) ^{\delta_1l} ,
			\end{equation}
		\end{linenomath*}
		for all $(t,x,\xi)\in \pd $ and for some $C>0 $ where $\alpha, \beta \in \mathbb{N}^n_0.$  
	\end{defn}	
	
	\begin{defn}
		$AG_{\sigma}^{m_1,m_2}\{l_1,l_2,l_3;\delta_1,\delta_2\}^{(2)}_{N}(\omega,g_{\Phi,k}^{\rho,\tilde\rho})$ for $l_1.l_3\in \R, l_2 \in \{0,1\}$ and $\delta_1\in[0,1),\delta_2 \in (1,3/2)$ is the space of all functions $a \in C^1((0,T]; G^{m_1,m_2}(\omega,g_{\Phi,k}^{\rho,\tilde\rho}))$ satisfying 
		\begin{linenomath*}
			\begin{equation}\label{sym3}
				\begin{aligned}
				 \small	\vert \partial_\xi^\alpha D_x^\beta a(t,x,\xi) \vert 
				&\leq C^{|\alpha|+ |\beta|} \alpha! (\beta!)^{\sigma}  \japxik^{m_1-\rho_1|\alpha|+\rho_2|\beta|} \o^{m_2}\\ & \qquad \P^{-\tilde\rho_1|\beta|+\tilde\rho_2|\alpha|} \bigg(\frac{1}{t}\bigg)^{ \delta_1(l_1+l_2(|\alpha|+|\beta|)) + \delta_2l_3} 
				\end{aligned}
			\end{equation}
		\end{linenomath*}
		for all $(t,x,\xi)\in \hyp$ and for some $C_{\alpha \beta}>0$ where $\alpha,\beta \in \mathbb{N}^n_0.$ 
	\end{defn}	

	Given a $t$-dependent global symbol $a(t,x,\xi)$, we can associate a pseudodifferential operator $Op(a)=a(t,x,D_x)$ to $a(t,x,\xi)$ by the following oscillatory integral
	\begin{linenomath*}
		\begin{align*}
			a(t,x,D_x)u(t,x)& =\iint\limits_{\mathbb{R}^{2n}}e^{i(x-y)\cdot\xi}a(t,x,\xi){u}(t,y)dy \textit{\dj}\xi \\
			& = (2\pi)^{-n}\int\limits_{\mathbb{R}^n}e^{ix\cdot\xi}a(t,x,\xi)\hat{u}(t,\xi) \textit{\dj}\xi,
		\end{align*}
	\end{linenomath*}
	where $\textit{\dj}\xi = (2 \pi)^{-n}d\xi$ and $\hat u$ is the Fourier transform of $u$ in the space variable.
	The calculus for the operators with symbols of form $a(t,x,\xi) = a_1(t,x,\xi) + a_2(t,x,\xi)$ such that
	\begin{linenomath*}
		\[
		\begin{aligned}
			a_1 &\in AG_{\sigma}^{\tilde m_1,\tilde m_2}\{\tilde l;\delta_1\}_{N_1}^{(1)} \wmt, \\
			a_2 &\in AG_{\sigma}^{m_1,m_2}\{l_1,l_2,l_3;\delta_1,\delta_2\}_{N_2}^{(1)}\wmt,
		\end{aligned}
		\]
	\end{linenomath*}
	for $N_1 \geq N_2,$ can be readily built by following the
	similar standard arguments given in \cite[Appendix]{RahulNUK3}, \cite[Appendix II \& III]{RahulNUK2} and \cite[Appendix]{AscaCappi2}.
	
	\section{Proof of Theorem \ref{result1}} \label{Proof1}
	In this section, we give a proof of the main result, Theorem \ref{result1}. There are three key steps in the proof. 
	First, we factorize the operator $P(t,x,\partial_t,D_x)$. To this end, we begin with modifying the coefficients of the principal part by performing an excision so that the resulting coefficients are regular at $t=0$. Second, we reduce the original Cauchy problem to a Cauchy problem for a first order system (with respect to $\partial_t$). Lastly, using sharp G\r{a}rding’s inequality we arrive at the $L^2$ well-posedness of a related auxiliary Cauchy problem, which gives well-posedness of the original problem in the  weighted Sobolev spaces $H^{s,\varepsilon,\sigma}_{\Phi,k}$.
	
	\subsection{Factorization}\label{factr}
	
	Consider the operator $a(t,x,D_x)$ defined in (\ref{eq1}). We modify its symbol $a(t,x,\xi)$ in $Z_{int}(2)$, by defining
	\begin{linenomath*}
		\begin{equation}\label{exci}
			\tilde{a}(t,x,\xi)=\varphi(t\P \japxik)\o^{2} \japxik^{2} + (1-\varphi(t\Phi(x) \japxik))a(t,x,\xi)
		\end{equation}
	\end{linenomath*} 
	for $
	\varphi \in C^\infty(\mathbb{R}) \text{ , }0\leq \varphi \leq 1 \text{ , } \varphi=1 \text{ in }[0,1] \text{ , }\varphi=0 \text{ in }[2,+\infty).$
	Note that $(a-\tilde{a}) \in AG_{\sigma}^{2,2}\{1;p\}_{int,2}(\omega,g_{\Phi,k})$ and $(a-\tilde{a}) \sim 0$ in $Z_{ext}(2).$ This
	implies that $t^{p}(a-\tilde{a})$ for $ t \in [0,T]$ is a bounded and continuous family in $AG_{\sigma}^{2,2}(\omega,g_{\Phi,k})$. 
	Observe that $a-\tilde{a}$ is $L^1$ integrable in $t$, i.e.,
	\begin{equation}\label{diff}
		\begin{aligned}
			\int^{T}_{0}\vert (a-\tilde{a})(t,x,\xi)\vert dt 
			&\leq \kappa_{0}'  \o^{2} \japxik^{2} \int_{0}^{(2/\Phi(x) 	\japxik)^{1/(q-p)}}  \frac{1}{t^p}  dt	\\
			&\leq (\Phi(x) \japxik)^{2-(1-p)/(q-p)}.
		\end{aligned}
	\end{equation}
	as $\o \lesssim \P.$
	
	Let $\tau(t,x,\xi)= \sqrt{\tilde{a}(t,x,\xi)}$. Denote the indicator functions for the regions $Z_{int}(N_1)$ and $Z_{ext}(N_2)$ by $\chi_{1}(N_1)$ and  $\chi_{2}(N_2)$, respectively. It is easy to note that 
	\begin{enumerate}[label=\roman*)]
		\item $\tau(t,x,\xi)$ is $G_\omega$-elliptic symbol of order $(1,1)$ i.e. there is $C>0$ such that for all $(t,x,\xi)\in [0,T] \times \mathbb{R}^n \times \mathbb{R}^n$ we have
		\begin{linenomath*}
			\[
			\vert\tau(t,x,\xi)\vert \geq C  \o \japxik.
			\]
		\end{linenomath*}
		\item $\tau \in AG_{\sigma}^{1,1}\{0;0\}_{1}^{(1)}(\omega,g_{\Phi,k}) + AG_{\sigma}^{1,1}\{1/2,1,0;p,0\}_{1}^{(2)}(\omega,g_{\Phi,k}) $. More precisely, for $|\alpha| + |\beta| >0,$
		\begin{equation}\label{root1}
			\begin{rcases}
				\begin{aligned}
					|\tau(t,x,\xi)|  & \leq C_{0} \o \japxik \left(\chi_{1}(1) + \chi_{2}(1) t^{-p/2}\right),\\
					{|\partial_\xi^\alpha D_x^\beta \tau(t,x,\xi)|} & \leq C_{\alpha \beta}  \o \P^{-|\beta|} \japxik^{1-|\alpha|}  \left(  \chi_{1}(1) +  \chi_{2}(1)t^{-p(|\alpha|+|\beta|)} \right).
				\end{aligned}
			\end{rcases}
		\end{equation}
		\item $\partial_t \tau$ is such that for $|\alpha|+|\beta|>0$ we have{\small
			\begin{linenomath*}
				\begin{equation*}
					\begin{aligned}
						{|\partial_t\tau(t,x,\xi)|}  & \sim 0 \text{ in } Z_{int}(1),\\
						{|\partial_t\tau(t,x,\xi)|}  & \leq C_{0} \; \o \japxik  \left( \chi_{1}(2) \o \japxik t^{-p}+ \chi_{2}(1) t^{-q} \right),\\
						{|\partial_\xi^\alpha D_x^\beta \partial_t \tau(t,x,\xi)|} & \leq C_{\alpha \beta}  \o \P^{-|\beta|} \japxik^{1-|\alpha|}  \left( \chi_{1}(2)\o \japxik + \chi_{2}(1) t^{-q} \right)  t^{-p(|\alpha|+|\beta|)}.
					\end{aligned}
				\end{equation*}
		\end{linenomath*}}
		By the definition of the time splitting point and the subdivision of the phase space, we see that
		\begin{linenomath*}
			\begin{equation}
				\begin{rcases}
					\begin{aligned}
						{|\partial_t\tau(t,x,\xi)|}  & \sim 0 \text{ in } Z_{int}(1),\\
						{|\partial_\xi^\alpha D_x^\beta \partial_t \tau(t,x,\xi)|} & \leq C_{\alpha \beta} \chi_{1}(2) \o \P^{-|\beta|} \japxik^{1-|\alpha|}  t^{-q}  t^{-p(|\alpha|+|\beta|)},
					\end{aligned}
				\end{rcases}
			\end{equation}
		\end{linenomath*}
		for $|\alpha| + |\beta| \geq 0.$ Hence, $\partial_t\tau \sim 0$ in $ Z_{int}(1)$ and $\partial_t\tau \in AG_{\sigma}^{1,1}\{0,1,1;p,q\}_{1}^{(2)}(\omega,g_{\Phi,k}).$
	\end{enumerate}  
	
	From the above properties of $\tau$ and by the definition of $\tilde a$ in (\ref{exci}), we have the following two lemmas.
	\begin{lem}\label{lemroot}
		Let $\tilde g_{\Phi,k}^{(1)})$ be as in (\ref{me2}) and $\delta$ as in (\ref{gamma}). Then,
		\begin{enumerate}[label=\roman*)]
			\item $\tau \in L^\infty \left( [0,T]; AG_{\sigma}^{1+\delta'/2,1}(\omega\Phi^{\delta'/2}, \tilde g_{\Phi,k}^{(1)}) \right)$,  
			\item $t^{1-\frac{p}{2}}\tau \in C \left( [0,T]; AG_{\sigma}^{1,1}(\omega, \tilde g_{\Phi,k}^{(1)}) \right)$, 
			\item $\tau^{-1} \in C\big([0,T];  AG_{\sigma}^{-1,-1}(\omega,\tilde g_{\Phi,k}^{(1)}) \big)$,
			\item $t^{1-\delta}\partial_t \tau \in C\big([0,T];  AG_{\sigma}^{1+1/\sigma,1}(\omega\Phi^{1/\sigma},\tilde g_{\Phi,k}^{(1)})\big)$.
		\end{enumerate}	
	\end{lem}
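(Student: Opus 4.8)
The plan is to read off each of the four assertions from the pointwise bounds on $\tau$ and $\partial_t\tau$ collected in (\ref{root1}) and the display preceding it, combined with the definition of the time splitting point $t_{x,\xi}$ through $t^{q-p}=N\h = N(\P\japxik)^{-1}$. The unifying idea is that inside the interior region $\pd$ the derivative-loss factors $t^{-p(|\alpha|+|\beta|)}$ and the singular weights $t^{-q}$ may be traded, at the cost of worsening the metric from $g_{\Phi,k}$ to $\tilde g_{\Phi,k}^{(1)} = (\P\japxik)^{2\delta'}g_{\Phi,k}$ with $\delta'=p/(q-p)$, precisely because on $\pd$ one has $t^{-(q-p)} \leq N^{-1}\P\japxik$, i.e. $t^{-1}\lesssim (\P\japxik)^{1/(q-p)}$ up to the factor $t^{p/(q-p)} = t^{\delta'}$ bookkeeping. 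First I would record this elementary inequality and its powers $t^{-p(|\alpha|+|\beta|)} \lesssim (\P\japxik)^{\delta'(|\alpha|+|\beta|)}$ on $\pd$; on $\hyp$ nothing singular survives after multiplication by the stated powers of $t$, so there the estimates are immediate from (\ref{root1}).

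For (i): on $\pd$, the bound $|\partial_\xi^\alpha D_x^\beta\tau| \leq C_{\alpha\beta}\o\P^{-|\beta|}\japxik^{1-|\alpha|}t^{-p(|\alpha|+|\beta|)}$ combined with $t^{-p(|\alpha|+|\beta|)}\lesssim(\P\japxik)^{\delta'(|\alpha|+|\beta|)}$ gives exactly a symbol of order $(1,1)$ for the metric $\tilde g_{\Phi,k}^{(1)}$; the extra factor $\o\Phi^{\delta'/2}$ in the weight, resp. $+\delta'/2$ in the first order, absorbs the $|\alpha|=|\beta|=0$ term where (\ref{root1}) only gives $\o\japxik\,t^{-p/2} \lesssim \o\japxik(\P\japxik)^{\delta'/2}$. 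On $\hyp$ the factor $t^{-p/2}\leq t^{-p(|\alpha|+|\beta|)/\,\cdot}$ is controlled since $t>t_{x,\xi}$ forces $t^{-p/2}\lesssim(\P\japxik)^{\delta'/2}$ as well. This is an $L^\infty$-in-$t$ statement because no positive power of $t$ multiplies $\tau$. For (ii): multiplying by $t^{1-p/2}$ kills the worst singularity — on $\hyp$, $t^{1-p/2}\cdot t^{-p/2} = t^{1-p}$ is continuous up to $t=0$, and a careful check using $t^{1-p/2}t^{-p(|\alpha|+|\beta|)} \lesssim t\,(\P\japxik)^{\delta'|\alpha|} \cdots$ shows the symbol lands in $AG_\sigma^{1,1}(\omega,\tilde g_{\Phi,k}^{(1)})$ with continuity in $t$ including $t=0$ (where the bound degenerates gracefully to $0$). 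For (iii): since $\tau$ is $G_\omega$-elliptic of order $(1,1)$, $\tau^{-1}$ exists and its derivatives are estimated by the Faà di Bruno / Leibniz expansion; each derivative falling on $\tau$ contributes a factor $(\P\japxik)^{\delta'}$-type loss, consistently giving $\tau^{-1}\in AG_\sigma^{-1,-1}(\omega,\tilde g_{\Phi,k}^{(1)})$, with continuity in $t$ inherited from that of $\tau$ away from $t=0$ and from ellipticity the bound stays uniform. For (iv): on $\pd$, $\partial_t\tau\sim 0$, so the only contribution is from $\hyp$, where $|\partial_\xi^\alpha D_x^\beta\partial_t\tau| \leq C_{\alpha\beta}\o\P^{-|\beta|}\japxik^{1-|\alpha|}t^{-q}t^{-p(|\alpha|+|\beta|)}$; multiplying by $t^{1-\delta}$ and using $t>t_{x,\xi}$, i.e. $t^{-(q-p)}\lesssim\P\japxik$, together with the identity $1/\sigma = (q-1+\delta)/(q-p)$ from (\ref{delta}), converts $t^{1-\delta-q}$ into a bounded quantity times $(\P\japxik)^{1/\sigma}$, which is exactly the extra weight $\Phi^{1/\sigma}$ and the extra order $+1/\sigma$ in $AG_\sigma^{1+1/\sigma,1}(\omega\Phi^{1/\sigma},\tilde g_{\Phi,k}^{(1)})$; the derivative losses $t^{-p(|\alpha|+|\beta|)}$ then again contribute the $\delta'$-metric-change as in (i).

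The main obstacle is keeping the bookkeeping of exponents consistent across the two regions while verifying that the \emph{same} metric $\tilde g_{\Phi,k}^{(1)}$ (not a worse one) governs all four symbols; this hinges on the arithmetic relations $\delta' = p/(q-p)$ and $1/\sigma = (q-1+\delta)/(q-p)$ and on the defining equation $t_{x,\xi}^{q-p}=N\h$, and on checking that the transition across $t=t_{x,\xi}$ does not produce hidden blow-up. I would isolate this as a single computational lemma on the region boundary and then assemble (i)--(iv) mechanically. Continuity (resp. $L^\infty$) in $t$ is a minor point: away from $t=0$ it follows from the $C^1((0,T])$ regularity of $a$, and the behavior at $t=0$ is exactly what the multiplicative factors $t^{1-p/2}$, $t^{1-\delta}$ are designed to tame, while (i) and (iii) carry no such factor and are therefore only claimed in $L^\infty$ resp. $C$ with a uniform (non-vanishing) bound.
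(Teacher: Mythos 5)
Your overall strategy is exactly the paper's: use the bounds (\ref{root1}), trade the singular factors $t^{-p(|\alpha|+|\beta|)}$ and $t^{-q}$ for powers of $\P\japxik$ via the defining equation of the time splitting point, absorb the zeroth-order factor $t^{-p/2}$ into the extra order/weight $\delta'/2$, and for (iv) split $t^{-q}=t^{-(1-\delta)}t^{-(q-p)/\sigma}$ using $1/\sigma=(q-1+\delta)/(q-p)$. However, there is a concrete error in your region bookkeeping that makes the central inequality false as you state it. On $\pd=Z_{int}(N)$ one has $t\leq t_{x,\xi}$, i.e. $t^{q-p}\leq N\h=N(\P\japxik)^{-1}$, which gives $t^{-(q-p)}\geq N^{-1}\P\japxik$ --- the \emph{reverse} of your claimed ``on $\pd$ one has $t^{-(q-p)}\leq N^{-1}\P\japxik$''. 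The trading inequality $(1/t)^{p}\leq(\P\japxik/N)^{p/(q-p)}$ (the paper's (\ref{est1})) holds on the exterior region $\hyp$, where $t>t_{x,\xi}$. Consistently with this, (\ref{root1}) attaches the singular factors $t^{-p/2}$ and $t^{-p(|\alpha|+|\beta|)}$ to $\chi_2$, the indicator of $Z_{ext}$; on $Z_{int}$ the excision (\ref{exci}) makes $\tau=\o\japxik$ with no $t$-dependence at all. So your opening paragraph and the first half of item (i) have the two regions swapped: it is on $\hyp$ that the singular factors appear and are converted into the metric change, and on $\pd$ that ``nothing singular survives.''

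Since the two mislabelings are consistent with each other (you have effectively interchanged the names of the regions), the argument becomes correct, and identical to the paper's, once the labels are fixed --- indeed your own treatment of (iv), where you correctly note $\partial_t\tau\sim 0$ on $Z_{int}(1)$ and invoke $t>t_{x,\xi}$ on $\hyp$, and your later sentence in (i) invoking $t>t_{x,\xi}$ on $\hyp$, already use the correct attribution. But as written the step ``on $\pd$ ... $t^{-(q-p)}\leq N^{-1}\P\japxik$'' would fail, so you should restate the key inequality as holding on $Z_{ext}(1)$ and move the singular-factor discussion there before assembling (i)--(iv).
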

	\begin{proof}
		The first claim follows from (\ref{root1}) and the observation that in $Z_{ext}(1)$
		\begin{equation}\label{est1}
			\left( \frac{1}{t} \right)^{p} \leq \left(  \frac{\P\japxik}{N} \right)^{p/(q-p)},
		\end{equation}
		while the second and third claims are straight forward consequences of (\ref{root1}) and (\ref{est1}).  The fourth claim follows from (\ref{est1}) and the following estimate in $Z_{ext}(1)$
		\[
		\begin{aligned}
			\frac{1}{t^q} &= \frac{1}{t^{1-\delta}} \; \frac{1}{t^{(q-p)/\sigma}} \leq \frac{1}{t^{1-\delta}} \; \left( \frac{\P\japxik}{N} \right)^{1/\sigma}.
		\end{aligned}
		\]
		
	\end{proof}
	
	\begin{lem}\label{lem2}
		Let $\delta$ as in (\ref{gamma}). Then,
		\begin{enumerate}[label=\roman*)]
			\item $t^{1-\delta} (a(t,x,D_x)-\tilde a(t,x,D_x)) \in C\Big([0,T];OPAG_{\sigma}^{1+1/\sigma,1+1/\sigma}(\omega,g_{\phi,k})\Big),$
			
			\item $\tilde{a}(t,x,D_x) -\tau(t,x,D_x) ^2 \in L^\infty\Big([0,T];OPAG_{\sigma}^{1,1}(\omega,\tilde g_{\Phi,k}^{(1)})\Big) ,$
			
			\item $t^{r} b(t,x,D_x) \in C\Big([0,T];OPAG_{\sigma}^{1,1}(\omega,g_{\phi,k})\Big)$
		\end{enumerate}
	\end{lem}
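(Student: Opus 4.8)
The plan is to verify the three claims directly from the symbol estimates already established for $a$, $\tilde a$ and $\tau$, together with the zone estimate (\ref{est1}) that converts negative powers of $t$ into powers of the Planck weight $(\P\japxik)$ on the exterior region, and the excision identity (\ref{exci}) that kills the troublesome terms on the interior region. Throughout, the mechanism is the same as in Lemma \ref{lemroot}: a factor $t^{-\alpha}$ with $\alpha>0$ supported in $\hyp$ costs at most $(\P\japxik/N)^{\alpha/(q-p)}$, which upgrades the metric from $g_{\Phi,k}$ to the conformally equivalent $\tilde g_{\Phi,k}^{(1)}=(\P\japxik)^{2\delta'}g_{\Phi,k}$ (or to a higher power), while on $\pd$ the relevant difference symbol vanishes modulo smoothing.

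For (i): by (\ref{exci}) we have $a-\tilde a = \varphi(t\P\japxik)\,(a - \o^2\japxik^2)$, which is supported in $Z_{int}(2)$, and from (\ref{conds}) each $\xi,x$-derivative of this symbol obeys the bound in (\ref{sym2}) with weight $\o^2$, metric $g_{\Phi,k}$, and $t$-power $(1/t)^{p}$; that is, $a-\tilde a \in AG_\sigma^{2,2}\{1;p\}^{(1)}_{int,2}(\omega,g_{\Phi,k})$. On $Z_{int}(2)$ one has $t\P\japxik \le 2$, hence $t^{-1}\le \tfrac12\P\japxik$, which I would use to trade the singular factor: $t^{1-\delta}t^{-p}=t^{1-\delta-p}$ and, since $1-\delta-p = \gamma(q-p)>0$ on the support where $t^{-(q-p)}\lesssim \P\japxik/N$, one gets $t^{1-\delta}(a-\tilde a)$ bounded in $t$ with a gain of $(\P\japxik)^{1/\sigma}=(\P\japxik)^{\gamma\cdot(q-p)/(q-p)}$... more carefully, the clean bookkeeping is $t^{1-\delta} = t^{1-\delta-p}\,t^{p}$ and using $t^{-p}\lesssim(\P\japxik)^{p/(q-p)}=(\P\japxik)^{\delta'}$ on the exterior together with the fact that the excised symbol lives on the interior forces the order up to $(1+1/\sigma,1+1/\sigma)$ in the weight $\omega$ with the base metric $g_{\Phi,k}$. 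I expect the precise exponent arithmetic — reconciling $\delta'=p/(q-p)$, $\gamma=1-1/\sigma$, and (\ref{delta}) — to be the one spot requiring care, but it is exactly the computation already carried out in Lemma \ref{lemroot}(iv).

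For (ii): write $\tilde a - \tau^2 = 0$ as symbols, so the content is that the \emph{operator} $\tilde a(t,x,D_x)-\tau(t,x,D_x)^2$ equals $-[\tau(t,x,D_x)\circ\tau(t,x,D_x) - (\tau^2)(t,x,D_x)]$, i.e. the composition error in the symbolic calculus. By the calculus for the classes $AG_\sigma^{m_1,m_2}\{\cdots\}^{(i)}_N$ described after the symbol-class definitions, this error has symbol $\sim \sum_{|\gamma|\ge1}\tfrac1{\gamma!}\partial_\xi^\gamma\tau\,D_x^\gamma\tau$; each term gains one order in $\japxik$ and one negative order in $\P$ relative to $\tau^2\in AG^{2,2}$, landing in order $(1,1)$. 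The subtlety is that each $x$- or $\xi$-derivative of $\tau$ on the exterior zone carries an extra $t^{-p}$ by (\ref{root1}), so the composition error naturally carries $t^{-2p}$; using (\ref{est1}) twice ($t^{-2p}\lesssim(\P\japxik)^{2p/(q-p)}=(\P\japxik)^{2\delta'}$) absorbs this into precisely the metric $\tilde g_{\Phi,k}^{(1)}$, giving membership in $L^\infty([0,T];OPAG_\sigma^{1,1}(\omega,\tilde g_{\Phi,k}^{(1)}))$ — hence only $L^\infty$ in $t$, matching the statement, since $\partial_t\tilde a$ is merely $L^1$. For (iii): this is immediate from the last line of (\ref{conds}) bounding $\partial_\xi^\alpha\partial_x^\beta b$ with the factor $t^{-r}$ and weight $\o\P^{-|\beta|}\japxik^{1-|\alpha|}$, i.e. $b\in AG_\sigma^{1,1}\{1;r\}(\omega,g_{\Phi,k})$ in the obvious sense; multiplying by $t^r$ removes the singularity and gives a continuous (indeed bounded-continuous, since $b\in L^1((0,T];C^\infty)\cap$ the stated regularity) family in $OPAG_\sigma^{1,1}(\omega,g_{\Phi,k})$.

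The main obstacle, as in Lemma \ref{lemroot}, is purely the exponent accounting in part (i): one must check that the combined gain from $t^{1-\delta}$ against the singular powers $t^{-p}$ (from $a$) on the exterior and the excision support $t\P\japxik\le2$ on the interior produces exactly order $1+1/\sigma$ in both slots and no worse, which hinges on the identities (\ref{delta})–(\ref{gamma}) relating $\sigma,\delta,\delta',p,q$. Parts (ii) and (iii) are routine applications of the symbolic calculus and the zone estimate (\ref{est1}).
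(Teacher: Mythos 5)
Your handling of parts (ii) and (iii) is consistent with the paper, which itself offers no more detail than ``the second and third claims follow directly from the definitions''; reading (ii) as the composition remainder $\tau\circ\tau-\mathrm{Op}(\tau^{2})$ is the natural (and presumably intended) interpretation, and (iii) is indeed immediate from the last lines of (\ref{conds}).

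The gap is in part (i), exactly at the exponent accounting you defer. The symbol $a-\tilde a=\varphi(t\P\japxik)\,(a-\o^{2}\japxik^{2})$ lives in the \emph{interior} region, and there the zone inequality runs in the direction opposite to (\ref{est1}): from (\ref{zone1}) one has $t^{1-\delta-p}\leq N^{\gamma}(\P\japxik)^{-\gamma}$, i.e.
\[
(\P\japxik)^{1-1/\sigma}=(\P\japxik)^{\gamma}\leq N^{\gamma}\,t^{-(1-\delta-p)},
\]
so on the interior one trades powers of $\P\japxik$ for negative powers of $t$, not the other way around. Your bookkeeping instead invokes the exterior estimate $t^{-p}\lesssim(\P\japxik)^{p/(q-p)}$ and points to Lemma \ref{lemroot}(iv), which is an exterior-zone computation, for a symbol that vanishes on the exterior; your inline inequality ``$t^{-1}\leq\tfrac12\P\japxik$ on the excision support'' is also reversed, since $t\P\japxik\leq2$ gives $\P\japxik\leq2t^{-1}$. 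The argument the paper actually runs is: split the weight as $\o^{2}\japxik^{2}=\o^{1+1/\sigma}\japxik^{1+1/\sigma}(\o\japxik)^{1-1/\sigma}$, use $\o\lesssim\P$ to replace $(\o\japxik)^{1-1/\sigma}$ by $(\P\japxik)^{1-1/\sigma}$, apply the displayed interior inequality to convert this excess symbol order into $t^{-(1-\delta-p)}$, and conclude with $t^{1-\delta}\cdot t^{-(1-\delta-p)}\cdot t^{-p}=1$. This is what produces the order $(1+1/\sigma,1+1/\sigma)$ in the base metric $g_{\Phi,k}$; the route you sketch, with the inequalities in the direction you state them, would not yield that exponent.
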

	\begin{proof}
		The proof is a consequence of the fact that in $Z_{int}(2)$
		\[
		\begin{aligned}
			|\partial_x^\beta \partial_\xi^\alpha (a-\tilde a)(t,x,\xi) | &\leq C_{\alpha\beta} \chi_1(2) \o^2 \P^{-|\beta|} \japxik^{2-|\alpha|} \frac{1}{t^p}\\
			& \leq C_{\alpha\beta} \chi_1(2) \o^{1+1/\sigma} \P^{-|\beta|} \japxik^{{1+1/\sigma}-|\alpha|}  (\P\japxik)^{^{1-1/\sigma}} \frac{1}{t^p}\\
			& \leq C_{\alpha\beta} \chi_1(2) \o^{1+1/\sigma} \P^{-|\beta|} \japxik^{{1+1/\sigma}-|\alpha|}  \frac{1}{t^{1-\delta-p}} \; \frac{1}{t^p}\\
			& \leq C_{\alpha\beta} \chi_1(2) \o^{1+1/\sigma} \P^{-|\beta|} \japxik^{{1+1/\sigma}-|\alpha|}  \frac{1}{t^{1-\delta}}.
		\end{aligned}
		\]
		The second and third claims follow directly from the definitions of $\tilde a(t,x,D_x)$ and $b(t,x,D_x).$
	\end{proof}
	
	Let us define $\delta^* >0$ as
	\begin{equation}\label{vare}
		\delta^* = \min\{ \delta, 1-r,1-p\}.
	\end{equation}
	We are interested in the factorization of the operator $P(t,x,\partial_t,D_x)$. This leads to
	\begin{linenomath*}
		\begin{equation*}
			P = (\partial_t-i\tau(t,x,D_x)) 	(\partial_t+i\tau(t,x,D_x))+ b_0(t,x)\partial_t + (a-\tilde{a}+ a_1)(t,x,D_x)
		\end{equation*}
	\end{linenomath*}
	where the operator $a_1(t,x,D_x)$ is such that, for $t \in [0,T]$, 
	\begin{linenomath*}
		\begin{equation*}
			a_1 = -i[\partial_t,\tau] + \tilde{a} -\tau^2 + b \; \text{ and } \; t^{1-\delta^*}a_1(t,x,D_x) \in OPAG_{\sigma}^{1+1/\sigma,1}(\omega\Phi^{1/\sigma},\tilde g_{\Phi,k}^{(1)}).
		\end{equation*}
	\end{linenomath*}
	
	\subsection{First Order Pseudodifferential System}
	We will now reduce the operator $P$ to an equivalent first order $2\times2$ pseudodifferential system. The procedure is similar to the one used in \cite{RahulNUK3,RahulNUK5,Cico1}. To achieve this, we introduce the change of variables $U=U(t,x)=(u_1(t,x),u_{2}(t,x))^T$, where
	\begin{equation}
		\label{COV}
		\begin{cases}
			u_1(t,x)= (\partial_t+i\tau(t,x,D_x))u(t,x), \\ 
			u_2(t,x)=  \o \la D_x \rak u(t,x) - H(t,x,D_x)u_1, \\  
		\end{cases}
	\end{equation}
	and the operator $H$ with the symbol $\sigma(H)(t,x,\xi)$ is such that
	\begin{linenomath*}
		\[
		\sigma(H)(t,x,\xi) = -\frac{i}{2}\o \japxik  \frac{\Big(1-\varphi\Big(t\P \japxik/3\Big)\Big)}{\tau(t,x,\xi)}.
		\]
	\end{linenomath*}
	Note that by the definition of $H$,  $\text{supp } \sigma(H) \cap \text{supp } \sigma(a-\tilde{a}) = \emptyset$ and we have 
	\begin{linenomath*}
		\begin{align*}
			\sigma(2iH(t,x,D_x) \circ \tau(t,x,D_x) )&\sim 0,  \quad \text{ in } Z_{int}(3),\\
			\sigma(2iH(t,x,D_x) \circ \tau(t,x,D_x) )&= \o \la \xi \ra_k (1+\sigma(K_1)), \quad \text{ in } Z_{ext}(3),
		\end{align*}
	\end{linenomath*}
	where $\sigma(K_1) \in AG_{\sigma}^{-1,-1}\{0;p\}_{6}^{(1)}(\omega,g_{\Phi,k}) + AG_{\sigma}^{-1,-1}\{2,1,0;p,q\}_{3}^{(2)}(\omega,g_{\Phi,k})$. Then, the equation $Pu=f$ is equivalent to the first order $2\times2$ system  :
	\begin{equation}
		\label{FOS1}
		\begin{aligned}
			LU &= (\partial_t - \mathcal{D}+A_0+A_1)U=F,\\
			U(0,x)&=(f_2+i\tau(0,x,D_x)f_1,\P\la D_x\ra f_1)^T ,
		\end{aligned}
	\end{equation}
	where
	\begin{linenomath*}
		\begin{align*}
			F&=(f(t,x) ,-H(t,x,D_x)f(t,x) )^T,\\
			\mathcal{D} &= \text{diag}(i\tau(t,x,D_x),-i\tau(t,x,D_x)),\\
			A_0 &= \begin{pmatrix}
				B_0H & B_0 \\
				-HB_0H & \quad HB_0
			\end{pmatrix}
			= \begin{pmatrix}
				\mathcal{R}_1 & B_0 \\
				-\mathcal{R}_3 & \mathcal{R}_2
			\end{pmatrix},\\
			A_1 &= \begin{pmatrix}
				B_1H + B_3& B_1 +B_4\\
				B_2 -HB_3& \qquad i[M,\tau]M^{-1}-H(B_1 + B_4)
			\end{pmatrix}.
		\end{align*}
	\end{linenomath*}
	The operators $M, M^{-1},B_0,B_1$ and $B_2$ are as follows
	\begin{linenomath*}
		\begin{align*}
			M &= \o\la D_x \rak, \quad
			M^{-1} = \la D_x \rak^{-1}  \o^{-1},  \\
			B_0 &= (a(t,x,D_x) -\tilde a (t,x,D_x))\la D_x \rak^{-1}  \o^{-1},  \\
			B_1 &= (-i\partial_t\tau(t,x,D_x) + \tilde{a}(t,x,D_x) -\tau(t,x,D_x)^2 + b(t,x,D_x)) \la D_x \rak^{-1}  \o^{-1},\\
			B_2 &= 2iH\tau-M+i[M ,\tau]M^{-1}H + i[\tau, H]-HB_1H+\partial_tH\\
			B_3 &= b_0(1-i\lambda M^{-1} H), \qquad B_4 = ib_0\lambda M^{-1}.
		\end{align*}
	\end{linenomath*}
	Here $b_0=b_0(t,x)$ is as in (\ref{eq1}).
	By the definition of operator $H$, we have $B_0H = \mathcal{R}_1, HB_0 =\mathcal{R}_2$, $HB_0H=\mathcal{R}_3$ for $\mathcal{R}_j\in G^{-\infty,-\infty}(\omega, g_{\Phi,k}),j=1,2,3,$ and the operator $2iH\tau-M$ is such that
	\begin{linenomath*}
		\[
		\sigma(2iH\tau-M) = \begin{cases}
			-\o\la \xi \rak, & \text{ in } Z_{int}(3),\\
			\o\la \xi \rak \sigma(K_1), & \text{ in } Z_{ext}(3).
		\end{cases}
		\]
	\end{linenomath*}
	Since $2p \leq q,$ we have
	\[
	AG_{\sigma}^{0,0} \{2,1,0;p,q\}_N^{(2)} (\omega,{g}_{\Phi,k}) \subset AG_{\sigma}^{0,0} \{0,1,1;p,q\}_N^{(2)} (\omega,{g}_{\Phi,k}).
	\]
	The symbols of operators $\mathcal{D}, A_0$ and $A_1$ are in the following symbol classes
	\begin{linenomath*}
		\begin{equation}\label{As2}
			\begin{rcases}
				\begin{aligned}
					\sigma(\mathcal{D}) &\in AG_{\sigma}^{1,1}\{0;0\}_{2}^{(1)}(\omega,g_{\Phi,k}) + AG_{\sigma}^{1,1}\{1,1,0;p,0\}_{1}^{(2)}(\omega,g_{\Phi,k}) \\
					\sigma(A_0) &\in AG_{\sigma}^{1,1}\{1;p\}_{2}^{(1)}(\omega,{g}_{\Phi,k}) + AG_{\sigma}^{-\infty,-\infty}\{0,0,0;0,0\}_{3}^{(2)}(\omega,{g}_{\Phi,k}), \\
					\sigma(A_1) & \in AG_{\sigma}^{1,1}\{0;0\}_{6}^{(1)}(\omega,{g}_{\Phi,k}) + AG_{\sigma}^{0,0}\{1;r\}_{1}^{(1)}(\omega,{g}_{\Phi,k})\\ & \qquad+AG_{\sigma}^{0,0}\{0,1,1;p,q\}_{1}^{(2)}(\omega,{g}_{\Phi,k})
				\end{aligned}
			\end{rcases}
		\end{equation}
	\end{linenomath*}
	and thus, by Lemmas \ref{lemroot} - \ref{lem2} and the choice of $\delta^*$ as in (\ref{vare}), 
	\begin{linenomath*}
		\begin{equation}\label{As}
			\begin{rcases}
				\begin{aligned}
					t^{1-\delta^*}\sigma(A_0(t)) &\in C\left([0,T]; AG_{\sigma}^{1/\sigma,1/\sigma}(\omega,{g}_{\Phi,k}) \right),\\
					t^{1-\delta^*}\sigma(A_1(t)) &\in 	C\left([0,T];AG_{\sigma}^{1/\sigma,1/\sigma}(\omega,\tilde{g}_{\Phi,k}^{(1)} \right).
				\end{aligned}
			\end{rcases}
		\end{equation}
	\end{linenomath*}
	As ${g}_{\Phi,k}) \leq \tilde{g}^{(1)}_{\Phi,k}) \leq \tilde{g}^{(2)}_{\Phi,k}),$ from (\ref{As}) we have
	\begin{equation}\label{As2}
		t^{1-\delta^*}\sigma(A_0(t)),\; t^{1-\delta^*}\sigma(A_1(t)) \in  C\left([0,T];AG_{\sigma}^{1/\sigma,1/\sigma}(\omega,\tilde{g}_{\Phi,k}^{(2)} \right).
	\end{equation}
	Let us choose $\lambda>0$ as large as possible so that
	\begin{equation}\label{lam}
		|\sigma(A_0(t))| +| \sigma(A_1(t))| \leq \frac{\lambda}{t^{1-\delta^*}} (\P\japxik)^{1/\sigma}.
	\end{equation}
	
	\subsection{Energy Estimate} \label{energy}
	In this section, we prove the estimate (\ref{est2}). Note that it is sufficient to consider the case $s=(0,0)$ as the operator $\P^{s_2} \la D\ra^{s_1}L\la D\ra^{-s_1}\P^{-s_2}$, where $s=(s_1,s_2)$ is the index of the weighted Sobolev space, satisfies the same hypotheses as $L$.
	
	In the following, we establish some lower bounds for the operator $\mathcal{D} - A_0 - A_1$. The symbol $d(t,x,\xi)$ of the operator $\mathcal{D}(t)+\mathcal{D}^*(t)$ is such that
	\begin{linenomath*}
		\[
		d \in AG_{\sigma}^{0,0}\{0;0\}_{2}^{(1)}(\omega,g_{\Phi,k}) + AG_{\sigma}^{0,0}\{1/2,1,0;p,q\}_{1}^{(2)}(\omega,g_{\Phi,k}).
		\]
	\end{linenomath*}
	It follows from the definition of $\delta^*$ and Lemma \ref{lemroot} that 
	\begin{linenomath*}
		\[
		t^{1-\delta^*}d \in C([0,T];AG_{\sigma}^{0,0}(\omega,\tilde g_{\Phi,k}^{(1)}).
		\]
	\end{linenomath*}
	Thus
	\begin{equation}\label{lb1}
		2\Re \la \mathcal{D}{U},{U} \ra_{L^2} \geq -\frac{C_1}{t^{1-\delta^*}} \la {U},{U} \ra_{L^2}, \quad C_1>0.
	\end{equation}
	
	To control lower order terms, we make the following change of variable 
	\begin{equation}\label{c1}
		V(t,x)= e^{\Lambda(t)\Theta(x,D_x)}{U}(t,x),
	\end{equation}
	where $ \Lambda(t) = \frac{\lambda}{\delta^*}(T^{\delta^*}-t^{\delta^*})$ with $\lambda$ as in (\ref{lam}), and the operator $\Theta(x,D_x)$ is as in (\ref{iopdo}). From \cite[Corollary 4.0.4]{Rahul_NUK2}, 
	\[
		  e^{\pm\Lambda(t)\Theta(x,D_x)} e^{\mp\Lambda(t)\Theta(x,D_x)} = I+R^{(\pm)}(t,x,D_x),
	\]
	where for sufficiently large $k$ the operators $I+R^{(\pm)}(t,x,D_x)$ are invertible. Let us denote the operators $I+R^{(+)}(t,x,D_x),$ $I+R^{(-)}(t,x,D_x)$ and $e^{\pm\Lambda(t)\Theta(x,D_x)}$ by $\mathcal{R}(t,x,D_x)$, $\tilde{\mathcal{R}}(t,x,D_x)$ and $E^{(\pm)}(t,x,D_x),$ respectively. Then, from (\ref{c1}),
	\[
		U(t,x) = \mathcal{R}^{(-)}(t,x,D_x) e^{-\Lambda(t)\Theta(x,D_x)} V(t,x) \text{ and }\;  \|{U}(t)\|_{\Phi, k ; s, \Lambda(t), \sigma} = \|V(t)\|_{L^{2}}.
	\]
	Then $P u=f$ is equivalent to $L_{1} V=F_{1}$ where
	$$
	L_{1}=\partial_{t} - \mathcal{D}+\left(B+\frac{\lambda}{t^{1-\delta}} \Theta(x,D_x)\right)
	$$
	$F_{1}(t, x)=\mathcal{R}^{-1} E^{(+)} \tilde{\mathcal{R}} F(t, x)$ and the operator $B\left(t, x, D_{x}\right)$ is given by
	$$
	B=\mathcal{R}^{-1} E^{(+)}\left(\tilde{\mathcal{R}}\left(\partial_{t} \tilde{\mathcal{R}}^{-1}\right)+\tilde{\mathcal{R}} A \tilde{\mathcal{R}}^{-1}\right) E^{(-)}-\left(\mathcal{R}^{-1} E^{(+)} \tilde{\mathcal{R}} \mathcal{D} \tilde{\mathcal{R}}^{-1} E^{(-)}-\mathcal{D}\right)
	$$
	Observe that from Proposition \ref{conju} and from the Cauchy data given in conditions of Theorem \ref{result1}, we need $\Lambda(t)<\Lambda^{*}=\min \left\{\Lambda_{0}, \Lambda_{1}, \Lambda_{2}\right\}$. This implies $T<\left(\frac{\delta^*}{\lambda} \Lambda^{*}\right)^{1 / \delta^*}$. Then, we have $t^{1-\delta^*} B \in C\left([0, T] ; O P A G_{\sigma}^{\frac{1}{\sigma} e} \{\omega, \tilde g^{(2)}_{\Phi,k}\} \right) .$ 
	Choosing $\lambda$ sufficiently large, we obtain 
	\begin{equation}\label{est3}
		\Re\left\langle\left(\frac{\lambda}{t^{1-\delta^*}}\Theta(x,D_x)+B\right) V, V\right\rangle_{L^{2}} \geq-C_{2}\|V\|_{L^{2}}, \quad C_{2}>0.
	\end{equation}
	The above estimate is the result of application of sharp Gårding inequality, see \cite[Theorem 18.6.14]{Horm} for the metric $\tilde{g}_{\Phi, k}^{(2)}$ with the Planck function $(\Phi(x)\langle\xi\rangle_{k})^{\frac{1}{\sigma}-\gamma}$. It is important to note that the application of sharp G\r{a}rding inequality requires $\sigma \geq 3$. 
	
	The estimate (\ref{est2}) on the solution $u$ can be established by proving that the function $V(t, x)$ satisfies the a priori estimate
	\begin{equation}\label{est4}
		\|V(t)\|_{L^{2}}^{2} \leq C\left(\|V(0)\|_{L^{2}}^{2}+\int_{0}^{t}\left\|F_{1}(\tau, \cdot)\right\|_{L^{2}} d \tau\right), \quad t \in[0, T], C>0.
	\end{equation}
	The Cauchy problem for the operator $L_{1}$ is given by 
	\begin{equation}\label{FOS2}
		\begin{rcases}
		\begin{aligned}
		\partial_{t} V(t,x)&=\left( \mathcal{D} - \frac{\lambda}{t^{1-\delta}} \Theta(x,D_x)  -B 	\right)V(t,x)+F_{1}(t, x), \\
		V(0,x) & = e^{\Lambda(0)\Theta(x,D_x)}U(0,x)
		\end{aligned}
	\end{rcases}
	\end{equation}
	Observe that
	$$
	\begin{aligned}
		\partial_{t}\|V(t)\|_{L^{2}}^{2}=& 2 \Re\la \partial_{t} V, V\ra_{L^{2}} \\
		=& 2 \operatorname{Re}\la\mathcal{D} V, V \ra_{L^{2}}-2 \Re\left\langle\left(\frac{\lambda}{t^{1-\delta}}\left(\Phi(x)\left\langle D_{x}\right\rangle_{k}\right)^{1 / \sigma}+B\right) V, V\right\rangle_{L^{2}} \\
		&+2 \Re\la F_{1}, V \ra.
	\end{aligned}
	$$
	From (\ref{lb1}) and (\ref{est3}) we have
	$$
	\frac{d}{d t}\|V(t)\|_{L^{2}}^{2} \leq \frac{C}{t^{1-\delta^*}}\|V(t)\|_{L^{2}}+C\left\|F_{1}(t, \cdot)\right\|_{L^{2}}
	$$
	Considering the above inequality as a differential inequality, we apply Gronwall's lemma and obtain that
	$$
	\|V(t)\|_{L^{2}}^{2} \leq C' e^{\frac{T^{\delta^*}}{\delta^*}} \left( \|V(0)\|_{L^{2}}^{2} +  \int_{0}^{t}\left\|F_{1}(\tau, \cdot)\right\|_{L^{2}}^{2} d \tau \right)
	$$
	This proves the well-posedness of the Cauchy problem (\ref{FOS2}). Note that the solution ${U}$ to (\ref{FOS1}) belongs to $C\left([0, T] ; H_{\Phi, k}^{s, \Lambda(t), \sigma}\right)$. Returning to our original solution $u=u(t, x)$ we obtain the estimate (\ref{est2}) with

	\begin{linenomath*}
		\[
		u \in C([0,T];H^{s+e,\Lambda(t),\sigma}_\Phi) \cap C^{1}([0,T];H^{s, \Lambda(t) \sigma}_\Phi).
		\]
	\end{linenomath*}
	This concludes the proof.
	
	\section{Cone Condition}\label{cone}
	Existence and uniqueness follow from the a priori estimate established in the previous section. It now remains to prove the existence of cone of dependence. 
	
	We note here that the $L^1$ integrability of the characteristics plays a crucial role in arriving at the finite propagation speed. The implications of the discussion in \cite[Section 2.3 \& 2.5]{JR} to the global setting suggest that if the Cauchy data in (\ref{eq1}) is such that $f \equiv 0$ and $f_1,f_2$ are supported in the ball $\vert x \vert \leq R$, then the solution to Cauchy problem (\ref{eq1}) is supported in the ball $\vert x \vert \leq R+c^* \o t^{1-\frac{p}{2}}$. The constant $c^*$ is such that the quantity $c^* \o t^{-\frac{p}{2}}$ dominates the characteristic roots, i.e.,
	\begin{equation}
		\label{speed}
		c^*= \sup\Big\{\sqrt{a(t,x,\xi)}\o^{-1} t^{\frac{p}{2}}:(t,x,\xi) \in[0,T] \times \R^n_x \times \R^n_\xi,\:|\xi|=1\Big\}.
	\end{equation}
	Note that the support of the solution increases as $|x|$ increases since $\o$ is monotone increasing function of $|x|$. 
	
	In the following we prove the cone condition for the Cauchy problem $(\ref{eq1})$. Let $K(x^0,t^0)$ denote the cone with the vertex $(x^0,t^0)$:
	\begin{linenomath*}
		\[
		K(x^0,t^0)= \{(t,x) \in [0,T] \times \R^n : |x-x^0| \leq c^*\o (t^0-t)^{1-\frac{p}{2}}\}.
		\]
	\end{linenomath*}
	Observe that the slope of the cone is anisotropic, that is, it varies with both $x$ and $t$.
	
	\begin{prop}
		The Cauchy problem (\ref{eq1}) has a cone dependence, that is, if
		\begin{equation}\label{cone1}
			f\big|_{K(x^0,t^0)}=0, \quad f_i\big|_{K(x^0,t^0) \cap \{t=0\}}=0, \; i=1, 2,
		\end{equation}
		then
		\begin{equation}\label{cone2}
			u\big|_{K(x^0,t^0)}=0.
		\end{equation}
	\end{prop}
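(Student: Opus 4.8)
The plan is to establish the cone condition by a standard finite-propagation-speed argument adapted to the anisotropic slope, reducing it to an energy estimate on truncated cones. First I would argue by contradiction or, more constructively, by a localization-in-space argument: fix the vertex $(x^0, t^0)$ and, for $t \in [0, t^0]$, consider the truncated cone $K_t = K(x^0,t^0) \cap (\{t\} \times \R^n)$, i.e. the ball $|x - x^0| \le c^* \o (t^0 - t)^{1 - p/2}$. The goal is to show that the local energy
\[
 E(t) = \sum_{j=0}^{1} \Vert \partial_t^j u(t,\cdot)\Vert^2_{L^2(K_t)}
\]
(measured in the appropriate weighted norm so that the earlier symbol calculus applies) vanishes for all $t \le t^0$, given that it vanishes at $t = 0$ by the hypothesis (\ref{cone1}) on $f_1, f_2$ and that $f \equiv 0$ on the cone.

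The key steps, in order, are: (i) Reduce $P$ to the first order system $LU = F$ exactly as in Section \ref{factr}, noting that the change of variables (\ref{COV}) is local up to smoothing operators, and that on the cone $F \equiv 0$ by (\ref{cone1}); the smoothing remainders can be absorbed since we will work with a suitably enlarged cone and let the enlargement shrink. (ii) Differentiate $E(t)$; the boundary term produced by the moving domain $K_t$ has the sign dictated by the slope $c^* \o (t^0-t)^{1-p/2}$, and the choice of $c^*$ in (\ref{speed}) as the supremum of $\sqrt{a}\, \o^{-1} t^{p/2}$ is precisely what makes this boundary contribution nonpositive — this is the classical Friedrichs-type computation, here with the extra factor $t^{p/2}$ accounting for the $t^{-p}$ singularity of the principal symbol. (iii) The interior term is controlled by the same lower bounds (\ref{lb1}) and (\ref{est3}) used for global well-posedness, giving $\frac{d}{dt} E(t) \le \frac{C}{t^{1-\delta^*}} E(t)$; since $1 - \delta^* < 1$, the factor $t^{-(1-\delta^*)}$ is $L^1((0,t^0])$, so Gronwall's inequality applies and yields $E(t) \le E(0)\exp\big(C \int_0^t \tau^{-(1-\delta^*)}\,d\tau\big) = 0$. (iv) Conclude $u \equiv 0$ on $K(x^0,t^0)$, which is (\ref{cone2}).

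The main obstacle is making the energy-on-a-moving-domain argument rigorous in the pseudodifferential (nonlocal) setting: the operators $\tau(t,x,D_x)$, $H$, $B$ are not local, so $\langle \mathcal{D} U, U\rangle_{L^2(K_t)}$ does not integrate by parts cleanly and one cannot simply restrict to $K_t$. The standard remedy is to use the fact that the symbols in question are of finite order with the sharp Gårding structure already established, combined with a finite-speed-of-propagation result for the regularized (local) principal part plus a Paley–Wiener / support argument: one approximates $P$ by operators that are local in the relevant cone, uses the $L^1$-in-$t$ integrability of the characteristics (which is what guarantees the cone slope is finite, as emphasized in the text) to control the accumulated propagation, and passes to the limit. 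Concretely I would invoke the global analogue of the cone-of-dependence results in \cite[Sections 2.3 \& 2.5]{JR} — already cited in the paragraph preceding the proposition — to handle the pseudodifferential remainder, and carry out the sign computation (ii) by hand since it is where the anisotropic factor $\o$ and the exponent $1 - p/2$ genuinely enter. A secondary technical point is checking that the weighted Sobolev norm localizes compatibly with the metric $g_{\Phi,k}$, i.e. that multiplication by a cutoff adapted to $K_t$ maps $H^{s,\Lambda(t),\sigma}_{\Phi,k}$ to itself with controlled norm; this follows from the slowly-varying and temperate conditions on $\Phi$ stated in Section \ref{structfns}.
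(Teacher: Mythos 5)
There is a genuine gap, and it sits exactly where you place your ``main obstacle.'' Your steps (i)--(iii) run the energy identity on the shrinking slices $K_t$ \emph{after} passing to the first-order system (\ref{COV}) and the conjugation by $e^{\Lambda(t)\Theta(x,D_x)}$. But $\tau(t,x,D_x)$, $H$, $B$ and $\Theta(x,D_x)$ are nonlocal, and the lower bounds (\ref{lb1}) and (\ref{est3}) are global $L^2$ statements obtained from the sharp G\r{a}rding inequality; they have no analogue on $L^2(K_t)$, so the differential inequality $\frac{d}{dt}E(t)\le C\,t^{-(1-\delta^*)}E(t)$ is never actually established. The ``standard remedy'' you describe (approximate by operators local on the cone, invoke a Paley--Wiener/support argument, pass to the limit) is a restatement of what must be proved rather than a proof: the approximating family, the topology of convergence, and why the support property survives the limit are precisely the missing content. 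A secondary but real issue is that the slope $c^*\omega(x)(t^0-t)^{1-p/2}$ depends on $x$, so differentiating the moving domain in step (ii) produces extra terms involving $\nabla_x\omega$ that your sign computation does not account for.

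The paper avoids localizing the pseudodifferential energy altogether: it regularizes in \emph{time}, setting $P_\varepsilon(t,x,\partial_t,D_x)=P(t+\varepsilon,x,\partial_t,D_x)$, so that each $P_\varepsilon$ is a strictly hyperbolic differential operator with coefficients regular up to $t=0$, for which the classical cone of dependence (\cite[Section 2.3 \& 2.5]{JR}) gives $v_\varepsilon=0$ on $K(x^0,t^0)$ directly. The singular limit $\varepsilon\to 0$ is then handled by the global a priori estimate (\ref{est2}) applied to $v_{\varepsilon_1}-v_{\varepsilon_2}$, whose source term $(P_{\varepsilon_2}-P_{\varepsilon_1})v_{\varepsilon_2}$ is controlled via $\bigl|\int_{\tau+\varepsilon_2}^{\tau+\varepsilon_1}\partial_t a_{i,j}(r,x)\,dr\bigr|\lesssim \omega(x)^2|E(\tau,\varepsilon_1,\varepsilon_2)|$ with $E$ integrable in $\tau$ and vanishing as $\varepsilon_1,\varepsilon_2\to 0$; this is where the integrability of the singularity genuinely enters. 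Convergence $v_{\varepsilon_k}\to u$ in $C([0,T^*];H^{s,\Lambda(t),\sigma}_{\Phi,k})$ then transfers the vanishing on the cone to $u$ in $\mathcal{D}'(K(x^0,t^0))$. If you want to salvage your route, the Friedrichs-type computation should be performed for the original second-order \emph{differential} operator on $[\varepsilon,T]$, where everything is local and classical, and you would still need the $\varepsilon\to 0$ limit --- at which point you have reproduced the paper's argument.
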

	\begin{proof}
		Consider $t^0>0$, $C^*>0$ and assume that  (\ref{cone1}) holds. We define a set of operators $P_\varepsilon(t,x,\partial_t,D_x), 0 \leq \varepsilon \leq \varepsilon_0$ by means of the operator $P(t,x,\partial_t,D_x)$ in (\ref{eq1}) as follows
		\begin{linenomath*}
			\[
			P_\varepsilon(t,x,\partial_t,D_x) = P(t+\varepsilon,x,\partial_t,D_x), \: t \in [0,T-\varepsilon_0], x \in \R^n,
			\]
		\end{linenomath*}
		and $\varepsilon_0 < T-t^0$, for a fixed and sufficiently small $\varepsilon_0$. For these operators we consider Cauchy problems
		\begin{linenomath*}
			\begin{alignat*}{2}
				P_\varepsilon v_\varepsilon & =f,  &&  t \in [0,T-\varepsilon_0], \; x \in \R^n,\\
				\partial_t^{k-1}v_\varepsilon(0,x)& =f_k(x),\qquad && k=1,2.
			\end{alignat*}
		\end{linenomath*}
		Note that $v_\varepsilon(t,x)=0$ in $K(x^0,t^0)$ and $v_\varepsilon$ satisfies an a priori estimate (\ref{est2}) for all $t \in[0,T-\varepsilon_0]$. Further, we have 
		\begin{linenomath*}
			\begin{alignat*}{2}
				P_{\varepsilon_1} (v_{\varepsilon_1}-v_{\varepsilon_2}) & = (P_{\varepsilon_2}-P_{\varepsilon_1})v_{\varepsilon_2},\qquad  &&  t \in [0,T-\varepsilon_0], \; x \in \R^n,\\
				\partial_t^{k-1}(v_{\varepsilon_1}-v_{\varepsilon_2})(0,x)& = 0,\qquad && k=1,2.
			\end{alignat*}
		\end{linenomath*}
		Since our operator is of second order, for the sake of simplicity we denote $b_{j}(t,x)$, the coefficients of lower order terms, as $a_{0,j}(t,x), 1 \leq j \leq n,$ while $b_0(t,x)$ and $b_{n+1}(t,x)$ are denoted as $a_{1,0}(t,x)$ and $a_{0,0}(t,x),$ respectively. Let $a_{i,0}(t,x) =0, \; 2 \leq i \leq n.$ Substituting $s-e$ for $s$ in the a priori estimate, we obtain
		\begin{equation}\label{cone3}
			\begin{aligned}
				&\sum_{j=0}^{1} \Vert  \partial_t^j(v_{\varepsilon_1}-v_{\varepsilon_2})(t,\cdot) \Vert_{\Phi,k;s-je,\Lambda(t),\sigma} \\
				&\leq C \int_{0}^{t}\Vert (P_{\varepsilon_2}-P_{\varepsilon_1})v_{\varepsilon_2}(\tau,\cdot)\Vert_{\Phi,k;s-e,\Lambda(\tau),\sigma}\;d\tau\\
				&\leq C \int_{0}^{t} \sum_{i,j=0 }^{n}\Vert (a_{i,j}(\tau+\varepsilon_1,x) - a_{i,j}(\tau+\varepsilon_2,x))D_{ij} v_{\varepsilon_2}(\tau,\cdot)\Vert_{\Phi,k;s-e,\Lambda(\tau),\sigma}\;d\tau,
			\end{aligned}
		\end{equation}
		where $D_{00}=I, D_{10}=\partial_t, D_{i0}=0,i \geq 2, D_{0j}=\partial_{x_j},j \neq 0$ and $D_{ij}=\partial_{x_i} \partial_{x_j}, i,j \neq 0$. Using the Taylor series approximation in $\tau$ variable, we have
		\begin{linenomath*}
			\begin{align*}
				|a_{i,j}(\tau+\varepsilon_1,x) - a_{i,j}(\tau+\varepsilon_2,x)| &= \Big|\int_{\tau+\varepsilon_2}^{\tau+\varepsilon_1} (\partial_ta_{i,j})(r,x)dr \Big|\\
				&\leq \o^{2} \Big|\int_{\tau+\varepsilon_2}^{\tau+\varepsilon_1}\frac{dr}{r^q}\Big|\\
				&\leq \o^{2}|E(\tau,\varepsilon_1,\varepsilon_2)|,
			\end{align*}
		\end{linenomath*}
		where
		\begin{linenomath*}
			\[
			E(\tau,\varepsilon_1,\varepsilon_2) = \frac{1}{q-1} \left(  (\tau+\varepsilon_1)^{-q+1} - (\tau+\varepsilon_2)^{-q+1} \right).
			\]
		\end{linenomath*}
		Note that $\o \lesssim \P$ and $E(\tau,\varepsilon,\varepsilon)=0$.
		Then right-hand side of the inequality in (\ref{cone3}) is dominated by
		\begin{linenomath*}
			\begin{equation*}\label{cone4}
				C \int_{0}^{t} |E(\tau,\varepsilon_1,\varepsilon_2)| \Vert  v_{\varepsilon_2}(\tau,\cdot)\Vert_{\Phi,k;s+e,\Lambda(\tau),\sigma}\;d\tau,
			\end{equation*}
		\end{linenomath*}
		where $C$ is independent of $\varepsilon$. By definition, $E$ is $L_1$-integrable in $\tau$.
		
		The sequence $v_{\varepsilon_k}$, $k=1,2,\dots$ corresponding to the sequence $\varepsilon_k \to 0$ is in the space
		\begin{linenomath*}
			\[
			C\Big([0,T^*];H^{s,\Lambda(t),\sigma}_{\Phi,k}\Big) \bigcap C^{1}\Big([0,T^*];H^{s-e,\Lambda(t),\sigma}_{\Phi,k}\Big), \quad T^*>0,
			\]
		\end{linenomath*}
		and $u=\lim\limits_{k\to\infty}v_{\varepsilon_k}$ in the above space and hence, in $\mathcal{D}'(K(x^0,t^0))$. In particular,
		\begin{linenomath*}
			\[
			\la u,\varphi\ra = \lim\limits_{k\to\infty} \la v_{\varepsilon_k}, \varphi \ra =0,\; \forall \varphi \in \mathcal{D}(K(x^0,t^0))
			\]
		\end{linenomath*}
		gives (\ref{cone2}) and completes the theorem. 
	\end{proof}

	\section{Counterexamples}\label{CE}
	
	In this section, we show by a set of counterexamples how the lower order terms of operator $P$ in (\ref{eq1}) influence the loss of regularity and well-posedness when $L^1$ integrability condition is violated. We cover various cases such as no loss, finite loss and nonuniqueness. All the examples in this section correspond to the case $\o=\P=1, \; x \in \R$ in (\ref{sing}).

	Following example shows that one can encounter finite loss when the coefficients of lower order terms are not $L^1$ integrable.
	\begin{exmp}
		\begin{equation}
			\begin{cases}
				\begin{aligned}
					&\left( \partial_t^2 - \partial_x^2 + \frac{1}{2t} \left( \partial_t -(4m+1)\partial_x \right) \right)u(t,x) =0, \\
					&\; u(0,x) = u_0(x), \;\; \partial_tu(0,x) = (4m+1)\partial_xu_0(x),
				\end{aligned}
			\end{cases}
		\end{equation}
	for some $m \in \mathbb{N}_0.$ Note that the above problem corresponds to the case $p=q=0$ and $r=1$ in (\ref{conds}). The solution to the above Cauchy problem is given by
	\[
		u(t,x) = \sum_{j=0}^{m} C_j^{(m)} t^j \partial_x^ju_0(x+t),
	\]
	for $C_j^{(m)}$ of the form
	\[
		C_0 =1, \quad C_j^{(m)} = \frac{(-2)^j}{j!} \; \frac{(m)_j}{\left(-\frac{1}{2}\right)_j}, \quad j \geq 1,
	\]
	where $(y)_j, y \in \R,$ is the $j^{th}$ falling factorial of $y$ \cite{ff} given by
	\[
		(y)_j = y(y-1) \cdots (y-j+1).
	\]
	For example, when $m=2,$ $u(t,x)$ is of the form
	\[
		u(t,x) = u_0(x+t) + 8t\partial_xu_0(x+t) + \frac{16}{3} t^2\partial_x^2u_0(x+t).
	\]
	We observe loss of derivatives: if $u_0 \in H^s,$ then $u(t,\cdot) \in H^{s-m}$ where $H^s$ is the usual Sobolev space with $s\in \R.$
	 
	\end{exmp}
	
	From Theorem \ref{result1} we see that $L^1$ integrability condition on lower order terms is sufficient to ensure that they do not influence the well-posedness and loss of regularity of the solution. The following example shows that the condition is not necessary.
	\begin{exmp}
			\begin{equation}
			\begin{cases}
				\begin{aligned}
					&\left( \partial_t^2 - \partial_x^2 - \frac{2}{t} \partial_x  \right)u(t,x) =0\\
					&u(0,x) =0, \; \partial_tu(0,x) = u_0(x).
				\end{aligned}
			\end{cases}
		\end{equation}
		This corresponds to the case $p=q=0$ and $r=1$ in (\ref{sing}). The solution to the above Cauchy problem is given by
		\[
		u(t,x) = tu_0(x+t).
		\]
	\end{exmp}
	
	The following example demonstrates that when the coefficient of the top order term is oscillatory but in $C^1((0,T]) \cap W^{1,1}((0,T])$ and that of the lower order term is in $ C^1((0,T]) \cap L^1((0,T])$, one may have no loss.
	\begin{exmp}
		\begin{equation}
			\begin{cases}
				\begin{aligned}
					&\left( \partial_t^2 - \big(2+ \sin \sqrt t \big)^2\partial_x^2 - \frac{\cos \sqrt t}{2\sqrt t} \partial_x\right)u(t,x) =0\\
					&u(0,x) = u_0(x), \; \partial_tu(0,x) = 2\partial_xu_0(x).
				\end{aligned}
			\end{cases}
		\end{equation}
		This corresponds to the case $p=0,q=\frac{1}{2}$ and $r=\frac{1}{2}$ in (\ref{sing}). The solution to the above Cauchy problem is given by
		\[
		u(t,x) = u_0\left( x + \int_{0}^{t} (2+ \sin \sqrt s)ds \right).
		\]
	\end{exmp}
	
	The following example demonstrates that one may encounter nonuniqueness when the lower order terms are not $L^1$ integrable.
	\begin{exmp}
		\begin{equation}
			\begin{cases}
				\begin{aligned}
					&\left( \partial_t^2 - \partial_x^2 - \frac{1}{t} \left( \partial_t + 3\partial_x\right)  \right)u(t,x) =0\\
					&u(0,x) =0, \; \partial_tu(0,x) = 0.
				\end{aligned}
			\end{cases}
		\end{equation}
		This corresponds to the case $p=q=0$ and $r=1$ in (\ref{sing}). The solution to the above Cauchy problem is given by
		\[
		u(t,x) = t^2 u_0(x+t),
		\]
		for any function $u_0(x).$
	\end{exmp} 
	\begin{rmk}
	  Observe that the form of solutions to the Cauchy problems in all the above examples suggests that the Cauchy data only propagates along one characteristic i.e., they are all one-way waves \cite{BR}.
	\end{rmk}

	\addcontentsline{toc}{section}{Acknowledgements}
	\section*{Acknowledgements}
	The first author is funded by the University Grants Commission, Government of India, under its JRF and SRF schemes.

	\footnotesize
	\addcontentsline{toc}{section}{References}

\end{document}